\newtheorem{theorem}{Theorem}[section]
\newtheorem{proposition}[theorem]{Proposition}
\newtheorem{lemma}[theorem]{Lemma}
\newtheorem{corollary}[theorem]{Corollary}
\newtheorem{prop-def}{Proposition-Definition}[section]
\theoremstyle{definition}
\newtheorem{definition}[theorem]{Definition}
\newtheorem{remark}[theorem]{Remark}
\newtheorem{example}[theorem]{Example}
\newcommand{\nc}{\newcommand}
\newcommand {\emptycomment}[1]{}
\nc{\delete}[1]{{}}
\nc{\mmargin}[1]{}
\nc{\mlabel}[1]{\label{#1}}  
\nc{\mcite}[1]{\cite{#1}}  
\nc{\mref}[1]{\ref{#1}}  
\nc{\meqref}[1]{\eqref{#1}}  
\nc{\mbibitem}[1]{\bibitem{#1}} 
	\nc{\mlabel}[1]{\label{#1}  
		{\hfill \hspace{1cm}{\bf{{\ }\hfill(#1)}}}}
	\nc{\mcite}[1]{\cite{#1}{{\bf{{\ }(#1)}}}}  
	\nc{\mref}[1]{\ref{#1}{{\bf{{\ }(#1)}}}}  
	\nc{\meqref}[1]{\eqref{#1}{{\bf{{\ }(#1)}}}}  
	\nc{\mbibitem}[1]{\bibitem[\bf #1]{#1}} 
 \font\cyrs=wncyr7
\nc{\vep}{\varepsilon}
\nc{\bin}[2]{ (_{\stackrel{\scs{#1}}{\scs{#2}}})}  
\nc{\binc}[2]{(\!\! \begin{array}{c} \scs{#1}\\
		\scs{#2} \end{array}\!\!)}  
\nc{\bincc}[2]{  ( {\scs{#1} \atop
		\vspace{-1cm}\scs{#2}} )}  
\nc{\oline}[1]{\overline{#1}}
\nc{\mapm}[1]{\lfloor\!|{#1}|\!\rfloor}
\nc{\bs}{\bar{S}}
\nc{\cast}{{\,\mbox{\raisebox{.8pt}{$\scriptstyle \circledast$}}\,}}
\nc{\la}{\longrightarrow}
\nc{\ot}{\otimes}
\nc{\rar}{\rightarrow}
\nc{\dar}{\downarrow}
\nc{\dap}[1]{\downarrow \rlap{$\scriptstyle{#1}$}}
\nc{\defeq}{\stackrel{\rm def}{=}}
\nc{\dis}[1]{\displaystyle{#1}}
\nc{\dotcup}{\ \displaystyle{\bigcup^\bullet}\ }
\nc{\hcm}{\ \hat{,}\ }
\nc{\hts}{\hat{\otimes}}
\nc{\hcirc}{\hat{\circ}}
\nc{\lleft}{[}
\nc{\lright}{]}
\nc{\curlyl}{\left \{ \begin{array}{c} {} \\ {} \end{array}
	\right .  \!\!\!\!\!\!\!}
\nc{\curlyr}{ \!\!\!\!\!\!\!
	\left . \begin{array}{c} {} \\ {} \end{array}
	\right \} }
\nc{\longmid}{\left | \begin{array}{c} {} \\ {} \end{array}
	\right . \!\!\!\!\!\!\!}
\nc{\ora}[1]{\stackrel{#1}{\rar}}
\nc{\ola}[1]{\stackrel{#1}{\la}}
\nc{\scs}[1]{\scriptstyle{#1}} \nc{\mrm}[1]{{\rm #1}}
\nc{\dirlim}{\displaystyle{\lim_{\longrightarrow}}\,}
\nc{\invlim}{\displaystyle{\lim_{\longleftarrow}}\,}
\nc{\dislim}[1]{\displaystyle{\lim_{#1}}} \nc{\colim}{\mrm{colim}}
\nc{\mvp}{\vspace{0.3cm}} \nc{\tk}{^{(k)}} \nc{\tp}{^\prime}
\nc{\ttp}{^{\prime\prime}} \nc{\svp}{\vspace{2cm}}
\nc{\vp}{\vspace{8cm}}
\nc{\modg}[1]{\!<\!\!{#1}\!\!>}
\nc{\intg}[1]{F_C(#1)}
\nc{\lmodg}{\!<\!\!}
\nc{\rmodg}{\!\!>\!}
\nc{\cpi}{\widehat{\Pi}}
\nc{\ssha}{{\mbox{\cyrs X}}} 
\nc{\tsha}{{\mbox{\cyrt X}}}
\nc{\shpr}{\diamond}    
\nc{\labs}{\mid\!}
\nc{\rabs}{\!\mid}
\nc{\di}{\diamond}
\nc{\ad}{\mrm{ad}}
\nc{\rRB}{\mathsf{rRB}}
\nc{\cocrRB}{\mathsf{cocrRB}}
\nc{\PH}{\mathsf{PH}}
\nc{\cocPH}{\mathsf{cocPH}}
\nc{\ann}{\mrm{ann}}
\nc{\Aut}{\mrm{Aut}}
\nc{\Der}{\mrm{Der}}
\nc{\Sym}{\mrm{Sym}}
\nc{\br}{\mrm{bre}}
\nc{\can}{\mrm{can}}
\nc{\Cont}{\mrm{Cont}}
\nc{\rchar}{\mrm{char}}
\nc{\cok}{\mrm{coker}}
\nc{\de}{\mrm{dep}}
\nc{\dtf}{{R-{\rm tf}}}
\nc{\dtor}{{R-{\rm tor}}}
\nc{\Dif}{\mrm{Diff}}
\nc{\Div}{\mrm{Div}}
\nc{\End}{\mrm{End}}
\nc{\Ext}{\mrm{Ext}}
\nc{\Fil}{\mrm{Fil}}
\nc{\Fr}{\mrm{Fr}}
\nc{\Frob}{\mrm{Frob}}
\nc{\Gal}{\mrm{Gal}}
\nc{\GL}{\mrm{GL}}
\nc{\Gr}{\mrm{Gr}}
\nc{\Hom}{\mrm{Hom}}
\nc{\Hoch}{\mrm{Hoch}}
\nc{\hsr}{\mrm{H}}
\nc{\hpol}{\mrm{HP}}
\nc{\id}{\mrm{id}}
\nc{\im}{\mrm{im}}
\nc{\inv}{\mrm{inv}}
\nc{\Id}{\mrm{Id}}
\nc{\ID}{\mrm{ID}}
\nc{\Irr}{\mrm{Irr}}
\nc{\incl}{\mrm{incl}}
\nc{\length}{\mrm{length}}
\nc{\NLSW}{\mrm{NLSW}}
\nc{\Lie}{\mrm{Lie}}
\nc{\mchar}{\rm char}
\nc{\mpart}{\mrm{part}}
\nc{\ql}{{\QQ_\ell}}
\nc{\qp}{{\QQ_p}}
\nc{\rank}{\mrm{rank}}
\nc{\rcot}{\mrm{cot}}
\nc{\rdef}{\mrm{def}}
\nc{\rdiv}{{\rm div}}
\nc{\rtf}{{\rm tf}}
\nc{\rtor}{{\rm tor}}
\nc{\res}{\mrm{res}}
\nc{\SL}{\mrm{SL}}
\nc{\Spec}{\mrm{Spec}}
\nc{\tor}{\mrm{tor}}
\nc{\Tr}{\mrm{Tr}}
\nc{\tr}{\mrm{tr}}
\nc{\wt}{\mrm{wt}}
\nc{\bfk}{{\bf k}}
\nc{\bfone}{{\bf 1}}
\nc{\bfzero}{{\bf 0}}
\nc{\detail}{\marginpar{\bf More detail}
	\noindent{\bf Need more detail!}
	\svp}
\nc{\gap}{\marginpar{\bf Incomplete}\noindent{\bf Incomplete!!}
	\svp}
\nc{\FMod}{\mathbf{FMod}}
\nc{\Int}{\mathbf{Int}}
\nc{\Mon}{\mathbf{Mon}}
\nc{\remarks}{\noindent{\bf Remarks: }}
\nc{\Rep}{\mathbf{Rep}}
\nc{\Rings}{\mathbf{Rings}}
\nc{\Sets}{\mathbf{Sets}}
\nc{\Diff}{\mathbf{Diff}}
\nc{\Inte}{\mathbf{Inte}}
\nc{\U}{\mathbf{U}}
\nc{\BA}{{\mathbb A}}   \nc{\CC}{{\mathbb C}}
\nc{\DD}{{\mathbb D}}   \nc{\EE}{{\mathbb E}}
\nc{\FF}{{\mathbb F}}   \nc{\GG}{{\mathbb G}}
\nc{\HH}{{\mathbb H}}   \nc{\LL}{{\mathbb L}}
\nc{\NN}{{\mathbb N}}   \nc{\PP}{{\mathbb P}}
\nc{\QQ}{{\mathbb Q}}   \nc{\RR}{{\mathbb R}}
\nc{\TT}{{\mathbb T}}   \nc{\VV}{{\mathbb V}}
\nc{\ZZ}{{\mathbb Z}}   \nc{\TP}{\widetilde{P}}
\nc{\cala}{{\mathcal A}}    \nc{\calc}{{\mathcal C}}
\nc{\cald}{\mathcal{D}}     \nc{\cale}{{\mathcal E}}
\nc{\calf}{{\mathcal F}}    \nc{\calg}{{\mathcal G}}
\nc{\calh}{{\mathcal H}}    \nc{\cali}{{\mathcal I}}
\nc{\call}{{\mathcal L}}    \nc{\calm}{{\mathcal M}}
\nc{\caln}{{\mathcal N}}    \nc{\calo}{{\mathcal O}}
\nc{\calp}{{\mathcal P}}    \nc{\calr}{{\mathcal R}}
\nc{\cals}{{\mathcal S}}    \nc{\calt}{{\Omega}}
\nc{\calv}{{\mathcal V}}    \nc{\calw}{{\mathcal W}}
\nc{\calx}{{\mathcal X}}
\nc{\fraka}{{\mathfrak a}}
\nc{\frakb}{\mathfrak{b}}
\nc{\frakg}{{\frak g}}
\nc{\frakl}{{\frak l}}
\nc{\fraks}{{\frak s}}
\nc{\frakB}{{\frak B}}
\nc{\frakm}{{\frak m}}
\nc{\frakM}{{\frak M}}
\nc{\frakp}{{\frak p}}
\nc{\frakW}{{\frak W}}
\nc{\frakX}{{\frak X}}
\nc{\frakS}{{\frak S}}
\nc{\frakA}{{\frak A}}
\nc{\frakx}{{\frakx}}
\nc{\ynr}[1]{\textcolor{orange}{\underline{Yunnan:}#1 }}
\nc{\lir}[1]{\textcolor{red}{\underline{Li:}#1 }}
	\newtheorem{theorem}{Theorem}[section]
	\newtheorem{lemma}[theorem]{Lemma}
	\newtheorem{proposition}[theorem]{Proposition}
	\newtheorem{corollary}[theorem]{Corollary}
	\theoremstyle{definition}
	\newtheorem{definition}[theorem]{Definition}
	\theoremstyle{remark}
	\newtheorem{remark}[theorem]{Remark}
	\numberwithin{equation}{section}
\begin{document}

\title[ ]{Fundamental theorem of Poisson 3-Lie $(A,H)$-Hopf modules}

\author{Daowei Lu}
\address{School of Mathematics and Big Data, Jining University,
Qufu 273155, China}
\email{ludaowei620@126.com}

\author{Dingguo Wang}
\address{Department of General Education, Shandong Xiehe University, Jinan 250109, China}

\begin{abstract}
Let $H$ be a Hopf algebra with a bijective antipode and $A$ an $H$-comodule Poisson 3-Lie algebra. Assume that there exists an $H$-colinear map which is also an algebra map from $H$ to the Poisson center of $A$. In this paper we generalize the fundamental theorem of $(A, H)$-Hopf modules to Poisson 3-Lie $(A, H)$-Hopf modules and deduce relative projectivity in the category of Poisson 3-Lie $(A, H)$-Hopf modules.
\end{abstract}

\keywords{Poisson 3-Lie algebra, Hopf algebra, Poisson 3-Lie Hopf module,  Fundamental theorem. \\ 
\qquad 2020 Mathematics Subject Classification. 17B63, 16T05.}

\maketitle


\allowdisplaybreaks

%
%
%

\section{Preliminaries}
\def\theequation{1.\arabic{equation}}
\setcounter{equation} {0}

\begin{definition}\cite{Filippov}
  A 3-Lie algebra is a vector space $A$ together with a skew-symmetric linear map ($3$-Lie bracket) $\{\;,\;,\;\}:
\otimes^3 A\rightarrow A$ such that the following Fundamental Identity holds:
\begin{equation}
\{\{x,y,z\},u,v\}=\{\{x,u,v\},y,z\}+\{\{y,u,v\},z,x\}+\{\{z,u,v\},x,y\},\label{1a}
\end{equation}
for $x, y, z, u, v\in A$.
\end{definition}
In other words, the adjoint action of $u,v$ in the ternary operation is a derivation.

A subspace $B$ of $A$ is called a subalgebra (an ideal ) of $A$ if $\{B,B,B\}\subseteq B$ $(\{B,A,A\}\subseteq B)$.

The center $A^A$ of  3-Lie algebra $A$ is Lie-$A$-invariant elements of $A$, that is, 
$$A^A=\Big\{b\in A|\{b,A,A\}=0\Big\}.$$

\begin{definition}
Let $A$ be a 3-Lie algebra. A 3-Lie $A$-module is a pair $(M,\di)$ where $M$ is a vector space and $\di:L\times L\times M\rightarrow M,\ (x, y)\ot m\mapsto(x,y)\di m$ is a linear map on the three items satisfying 
\begin{align}
&(x, y)\di m=-(y, x)\di m,\label{1b}\\
&(x, y)\di((u,v)\di m)-(u,v)\di((x, y)\di m)=(\{x,y,u\},v)\di m+(u,\{x,y,v\})\di m,\label{1c}\\
&(\{x,y,u\},v)\di m=(y,u)\di((x,v)\di m)-(x,u)\di((y,v)\di m)+(x,y)\di((u,v)\di m),\label{1d}
\end{align}
for all $x,y,u,v\in A,m\in M$.
\end{definition}

Let $M,N$ be two  3-Lie $A$-modules. A linear map $f:M\rightarrow N$ is called a morphism of 3-Lie $A$-modules if for all $x,y\in A,m\in M$,
$$f((x,y)\di m)=(x,y)\di f(m).$$

For 3-Lie $A$-module $M$, the Lie $A$-invariant elements of $M$ is
$$M^A=\Big\{m\in M|(x,y)\di m=0,\forall x,y\in A\Big\}.$$

\begin{definition}\cite{Dz}
 A Poisson 3-Lie algebra is a vector space $A$ with a bilinear operator $\cdot$ and a ternary operator $\{\;,\;,\;\}$ such that $(A,\cdot)$ is a commutative associative algebra, $(A, \{\;,\;,\;\})$ is a 3-Lie algebra and the following equation holds:
\begin{equation}
\{x,y,uv\}=u \{x,y,v\}+\{x,y,u\} v,\label{1e}
\end{equation}
for all $x,y,u,v\in A.$
\end{definition}
Thus the adjoint action of the pair $(x,y)$ on the commutative associative product is a derivation. Let $u=v=1$, we have $\{x,y,1\}=0$.

Let $A$ and $T$ be two Poisson 3-Lie algebras. A homomorphism of Poisson 3-Lie algebra is an algebra map $f:A\rightarrow T$ such that
$$f(\{x,y,z\})=\{f(x),f(y),f(z)\},$$
for all $x,y,z\in A$.

A subspace $B$ of $A$ is called a Poisson 3-Lie subalgebra of $A$ if $B$ is both a subalgebra and 3-Lie subalgebra of $A$.

\begin{definition}
Let $A$ be a Poisson 3-Lie algebra. A vector space $M$ is called a Poisson 3-Lie $A$-module if $(M,\cdot)$ is an $A$-module and $(M,\di)$ is a 3-Lie $A$-module satisfying the compatible conditions
\begin{align}
&(xy,z)\di m=x\cdot((y,z)\di m)+y\cdot((x,z)\di m),\label{1f}\\
&(x,y)\di(z\cdot m)=\{x,y,z\}\cdot m+z\cdot((x,y)\di m),\label{1g}
\end{align}
for all $x,y,z\in A,m\in M$.
\end{definition}
A morphism of Poisson 3-Lie $A$-modules is both an $A$-linear and a 3-Lie $A$-linear map.

In the identity (\ref{1f}), let $x=y=1$, we obtain that $(1,z)\di m=0$ for all $z\in A,m\in M$.

\section{The category of Poisson 3-Lie $(A,H)$-Hopf modules}
\def\theequation{2.\arabic{equation}}
\setcounter{equation} {0}

\begin{definition}
Let $H$ be a Hopf algebra and $A$ a Poisson 3-Lie algebra. Then $A$ is called a right $H$-comodule Poisson 3-Lie algebra if $A$ is a right $H$-comodule algebra and satisfies the following relation
\begin{equation}
\{x,y,z\}_{(0)}\ot\{x,y,z\}_{(1)}=\{x_{(0)},y_{(0)},z_{(0)}\}\ot x_{(1)}y_{(1)}z_{(1)},\label{2a}
\end{equation}
for all $x,y,z\in A$.
\end{definition}

\begin{remark}
Since the 3-Lie bracket is skew-symmetry, we have
\begin{align*}
\{x,y,z\}_{(0)}\ot\{x,y,z\}_{(1)}&=-\{x,z,y\}_{(0)}\ot\{x,z,y\}_{(1)}\\
&=-\{x_{(0)},z_{(0)},y_{(0)}\}\ot x_{(1)}z_{(1)}y_{(1)}\\
&=\{x_{(0)},y_{(0)},z_{(0)}\}\ot x_{(1)}z_{(1)}y_{(1)},
\end{align*}
thus
$$\{x_{(0)},y_{(0)},z_{(0)}\}\ot x_{(1)}y_{(1)}z_{(1)}=\{x_{(0)},y_{(0)},z_{(0)}\}\ot x_{(1)}z_{(1)}y_{(1)}.$$
And similarly
$$\{x_{(0)},y_{(0)},z_{(0)}\}\ot x_{(1)}y_{(1)}z_{(1)}=\{x_{(0)},y_{(0)},z_{(0)}\}\ot y_{(1)}x_{(1)}z_{(1)}.$$
\end{remark}

\begin{lemma}\label{lem:2a}
Let $H$ be a Hopf algebra and $A$ a Poisson 3-Lie algebra. Then $A^{coH}$ is an $H$-subcomodule Poisson 3-Lie algebra of $A$.
\end{lemma}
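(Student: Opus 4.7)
The plan is to verify three things directly from the definitions: that $A^{coH}$ is closed under the associative product of $A$, that it is closed under the $3$-Lie bracket, and that the comodule structure restricts to it. Recall that $A^{coH} = \{a \in A \mid \rho(a) = a \otimes 1\}$, where $\rho : A \to A \otimes H$ denotes the coaction.

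For the associative part, the closure of $A^{coH}$ under the product is a standard computation for comodule algebras: if $x, y \in A^{coH}$ then
\[
\rho(xy) = x_{(0)} y_{(0)} \otimes x_{(1)} y_{(1)} = xy \otimes 1,
\]
so $xy \in A^{coH}$. For the $3$-Lie part, the key input is the compatibility \eqref{2a}. If $x, y, z \in A^{coH}$, then applying \eqref{2a} gives
\[
\{x,y,z\}_{(0)} \otimes \{x,y,z\}_{(1)} = \{x_{(0)}, y_{(0)}, z_{(0)}\} \otimes x_{(1)} y_{(1)} z_{(1)} = \{x,y,z\} \otimes 1,
\]
which shows $\{x,y,z\} \in A^{coH}$. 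Hence $A^{coH}$ is a Poisson $3$-Lie subalgebra of $A$.

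Finally, the subcomodule condition is immediate, since by definition $\rho(a) = a \otimes 1 \in A^{coH} \otimes H$ for every $a \in A^{coH}$; that is, the restriction of $\rho$ to $A^{coH}$ takes values in $A^{coH} \otimes H$. There is no real obstacle here: the statement is essentially the observation that the coaction axiom \eqref{2a} was designed precisely so that the coinvariants inherit both structures. The only thing to be careful about is to record that the computation for the bracket uses \eqref{2a} exactly once, together with the fact that $1_H$ is the unit of $H$, and nothing more.
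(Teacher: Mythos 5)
Your proof is correct and follows essentially the same route as the paper: the closure under the associative product is the standard comodule-algebra fact, and the closure under the ternary bracket is the one-line application of the compatibility \eqref{2a} to coinvariant elements, yielding $\{x,y,z\}\otimes 1_H$. Nothing further is needed.
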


\begin{proof}
It is well known that $A^{coH}$ is an $H$-subcomodule algebra of $A$. For all $x,y,z\in A^{coH}$,
$$\{x,y,z\}_{(0)}\ot\{x,y,z\}_{(1)}=\{x_{(0)},y_{(0)},z_{(0)}\}\ot x_{(1)}y_{(1)}z_{(1)}=\{x,y,z\}\ot 1_H,$$
hence $\{x,y,z\}\in A^{coH}$, that is, $A^{coH}$ is a Lie subalgebra of $A$.
\end{proof}

\begin{definition}
Let $A$ be a right $H$-comodule Poisson 3-Lie algebra. A vector space $M$ is called a Poisson 3-Lie $(A,H)$-Hopf module if $M$ is a Poisson 3-Lie $A$-module and an $(A,H)$-Hopf module such that for all $x,y\in A,m\in M$,
\begin{equation}
((x,y)\di m)_{(0)}\ot ((x,y)\di m)_{(1)}=(x_{(0)},y_{(0)})\di m_{(0)}\ot x_{(1)}y_{(1)}m_{(1)}.\label{2b}
\end{equation}
\end{definition}

\begin{remark}
It is clear that an $H$-comodule Poisson 3-Lie algebra $A$ itself is Poisson 3-Lie $(A,H)$-Hopf module.
\end{remark}

\begin{lemma}\label{lem:2b}
Let $H$ be a Hopf algebra, $A$ a right $H$-comodule Poisson 3-Lie algebra, and $M$ a Poisson 3-Lie $(A,H)$-Hopf module. Then
\begin{itemize}
  \item [(1)] $M^A$ is an $H$-subcomodule of $M$.
  \item [(2)] $A^A$ is an $H$-subcomodule Poisson 3-Lie algebra of $A$, and $A^{AcoH}$ is a Poisson 3-Lie subalgebra of $A^A$.
  \item [(3)] $M^{AcoH}$ is a Poisson 3-Lie $A^{AcoH}$-submodule of $M$.
\end{itemize}
\end{lemma}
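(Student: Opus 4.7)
The plan is to establish (1) first; both (2) and (3) will then follow by combining (1) with direct axiom-chasing.

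For (1), I fix $m \in M^A$ and arbitrary $x, y \in A$ and apply the coaction to $(x, y) \di m = 0$. By (\mref{2b}) this yields the twisted vanishing
$$(x_{(0)}, y_{(0)}) \di m_{(0)} \ot x_{(1)} y_{(1)} m_{(1)} = 0 \quad \text{in } M \ot H.$$
My target is the stronger identity $\beta(x \ot y) := (x, y) \di m_{(0)} \ot m_{(1)} = 0$ for all $x, y$, because then, writing $\rho(m) = \sum_i m_i \ot h_i$ with $\{h_i\}$ linearly independent, one extracts $(x, y) \di m_i = 0$ for every $i$, so $\rho(m) \in M^A \ot H$. To remove the twist $x_{(1)} y_{(1)}$, view $A \ot A$ as an $H$-comodule via $\rho(x \ot y) = x_{(0)} \ot y_{(0)} \ot x_{(1)} y_{(1)}$, and write $h \cdot (n \ot k) := n \ot hk$ for the left $H$-action on $M \ot H$. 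The twisted vanishing reads $\zeta_{(1)} \cdot \beta(\zeta_{(0)}) = 0$ for every $\zeta \in A \ot A$. The key move is the translation map
$$T : (A \ot A) \ot H \to (A \ot A) \ot H, \qquad \zeta \ot h \mapsto \zeta_{(0)} \ot h \zeta_{(1)},$$
which is a bijection (with inverse $\zeta \ot h \mapsto \zeta_{(0)} \ot h S^{-1}(\zeta_{(1)})$) thanks to the bijectivity of $S$. Setting $f(\zeta \ot h) := h \cdot \beta(\zeta)$, associativity of the left $H$-action gives
$$f(T(\zeta \ot h)) = (h \zeta_{(1)}) \cdot \beta(\zeta_{(0)}) = h \cdot (\zeta_{(1)} \cdot \beta(\zeta_{(0)})) = 0,$$
so $f \circ T = 0$, whence $f = 0$ and $\beta(\zeta) = f(\zeta \ot 1_H) = 0$, as required.

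For (2), applying (1) to the $(A, H)$-Hopf module $A$ itself gives that $A^A$ is an $H$-subcomodule of $A$. Closure of $A^A$ under the 3-Lie bracket is immediate from its definition. For the commutative product, take $a, b \in A^A$ and $x, y \in A$: total antisymmetry of $\{\;,\;,\;\}$ gives $\{x, y, a\} = \{a, x, y\} = 0$ and similarly $\{x, y, b\} = 0$, so (\mref{1e}) yields
$$\{ab, x, y\} = \{x, y, ab\} = a\{x, y, b\} + \{x, y, a\} b = 0,$$
hence $ab \in A^A$. Applying Lemma~\mref{lem:2a} to the comodule Poisson 3-Lie algebra $A^A$ then yields $(A^A)^{coH} = A^{AcoH}$ as a Poisson 3-Lie subalgebra of $A^A$.

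For (3), the only nontrivial closure is $A^{AcoH} \cdot M^{AcoH} \subseteq M^{AcoH}$, since the $\di$-action of $A^{AcoH}$ on $M^A$ is automatically zero. Given $a \in A^{AcoH}$ and $m \in M^{AcoH}$, axiom (\mref{1g}) gives
$$(x, y) \di (a \cdot m) = \{x, y, a\} \cdot m + a \cdot ((x, y) \di m) = 0$$
by combining $a \in A^A$ with $m \in M^A$, and the $(A, H)$-Hopf module compatibility of the $A$-action yields $\rho(a \cdot m) = a_{(0)} \cdot m_{(0)} \ot a_{(1)} m_{(1)} = (a \cdot m) \ot 1_H$ since both $a$ and $m$ are coinvariants. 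Hence $a \cdot m \in M^{AcoH}$, and the remaining module axioms are inherited from $M$.

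The main obstacle is the untwisting step in (1): the passage from $\zeta_{(1)} \cdot \beta(\zeta_{(0)}) = 0$ to $\beta = 0$ crucially uses the bijectivity of $S$ through the invertibility of $T$, and is precisely where this assumption on the Hopf algebra is needed. Parts (2) and (3) are then routine given (1) together with the standing axioms.
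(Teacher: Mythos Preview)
Your proof is correct and follows essentially the same approach as the paper. For part~(1), the paper carries out a direct Sweedler computation showing
\[
(x,y)\di m_{(0)}\ot m_{(1)}
=(1\ot S^{-1}(x_{(1)2}y_{(1)2}))\cdot\bigl[(x_{(0)},y_{(0)})\di m_{(0)}\ot x_{(1)1}y_{(1)1}m_{(1)}\bigr],
\]
whose right-hand side vanishes because $(x_{(0)},y_{(0)})\di m=0$; your translation-map argument is the same untwisting via $S^{-1}$, packaged as the bijectivity of $T$ rather than as an explicit identity. Parts~(2) and~(3) match the paper almost verbatim.
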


\begin{proof}
(1) For all $x,y\in A,m\in M^A$, since
\begin{align*}
&(1_A\ot S^{-1}((x,y)_{(1)}))\cdot[((x,y)_{(0)}\di m)_{(0)}\ot((x,y)_{(0)}\di m)_{(1)}]\\
&=(1_A\ot S^{-1}((x,y)_{(1)2}))\cdot[(x,y)_{(0)}\di m_{(0)}\ot (x,y)_{(1)1}m_{(1)}]\\
&=(1_A\ot S^{-1}(x_{(1)2}y_{(1)2}))\cdot[(x_{(0)},y_{(0)})\di m_{(0)}\ot x_{(1)1}y_{(1)1}m_{(1)}]\\
&=(x,y)\di m_{(0)}\ot m_{(1)}.
\end{align*}
By a similar argument as \cite{Gu}, we conclude that $M^A$ is an $H$-subcomodule of $M$.

(2) By (1) $A^A$ is an $H$-subcomodule of $A$. Since $A^A$ is also a subalgebra by (\ref{1e}) and 3-Lie subalgebra of $A$, $A^A$ is an $H$-subcomodule Poisson 3-Lie algebra of $A$. Similar argument implies that $A^{AcoH}$ is a Poisson 3-Lie subalgebra of $A^A$.

(3) Firstly $M$ is a Poisson 3-Lie $A^{AcoH}$-module of $M$ since $A^{AcoH}$ is a Poisson 3-Lie subalgebra of $A$. Then for all $x,y\in A,a\in A^{AcoH},m\in M^{AcoH}$,
$$(x,y)\di(a\cdot m)=\{x,u,a\}\cdot m+a\cdot ((x,y)\di m)=0,$$
that is, $a\cdot m\in M^{AcoH}$ and $M^{AcoH}$ is an $A^{AcoH}$-module. It is obvious that $M^{AcoH}$ is a 3-Lie $A^{AcoH}$-module. 
\end{proof}

In what follows, we will say that a vector space $M$ is an $(\underline{A},H)$-comodule if $M$ is a 3-Lie $A$-module, a right $H$-comodule and the relation (\ref{2b}) is satisfied. We denote by $_{\underline{A}}\mathcal{M}$ the category of 3-Lie $A$-modules with Lie $A$-linear maps, and by $_{\underline{A}}\mathcal{M}^H$ the category of $(\underline{A},H)$-comodule with 3-Lie $A$-linear and $H$-colinear maps. For $M,N\in\! _{\underline{A}}\mathcal{M}^H$, we denote by $_{\underline{A}}$Hom$^H(M,N)$ the vector space of 3-Lie $A$-linear and $H$-colinear maps from $M$ to $N$.

\begin{lemma}\label{lem:2c}
\begin{itemize}
  \item [(1)] Let $N$ be a 3-Lie $A$-module. Then $N\ot H$ is an $(\underline{A},H)$-comodule with the 3-Lie $A$-action and $H$-coaction given by
\begin{align*}
&(x,y)\di (n\ot h)=(x_{(0)},y_{(0)})\di n\ot x_{(1)}y_{(1)}h,\\
&(n\ot h)_{(0)}\ot(n\ot h)_{(1)}=n\ot h_1\ot h_2,
\end{align*}
for all $x,y\in A, n\ot h\in N\ot H$.
  \item [(2)] Furthermore, if $H$ is commutative and $N$ is a Poisson 3-Lie $A$-module, then $N\ot H$ is a Poisson 3-Lie $(A, H)$-Hopf module with the $A$-action given by
$$x\cdot (n\ot h)= x_{(0)}\cdot n\ot x_{(1)}h,$$
for all $x,y\in A, n\ot h\in N\ot H$.
\end{itemize}
\end{lemma}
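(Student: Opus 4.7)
The plan is to verify the defining axioms one by one, reducing each to the corresponding axiom on $N$ (or on $A$) while tracking the $H$-factors produced by the coaction.

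For part (1), the $H$-comodule axioms on $N \ot H$ are just the coassociativity and counit axioms for $\Delta_H$, applied to the second tensor factor. The 3-Lie axioms \eqref{1b}--\eqref{1d} for the prescribed $\di$-action reduce, after expansion in Sweedler notation, to the corresponding axioms for $N$ multiplied on the right by the common $H$-factor $x_{(1)} y_{(1)} h$ (resp.\ $x_{(1)} y_{(1)} u_{(1)} v_{(1)} h$ in the cross identities), so the implications are immediate once those identities on $N$ are applied termwise. The compatibility \eqref{2b} is immediate from coassociativity of $\Delta_H$. For the skew-symmetry axiom \eqref{1b}, alongside the skew-symmetry of the action on $N$, one uses the technique from the Remark following Definition~2.1: swapping two of the Sweedler factors of an element of $A$ appearing alongside a 3-Lie term is permissible, as a consequence of the skew-symmetry of the bracket combined with \eqref{2a}.

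For part (2), two $A$-actions must be handled simultaneously: the associative action $\cdot$ and the 3-Lie action $\di$. The associative $A$-action axioms on $N \ot H$ follow directly from $A$ being an $H$-comodule algebra, and the compatibility with the $H$-coaction is the standard verification that $N \ot H$ carries the structure of a free $(A, H)$-Hopf module in the sense of Larson--Sweedler. The 3-Lie axioms are supplied by part~(1). The only new content is the two Poisson compatibilities \eqref{1f} and \eqref{1g}, and this is exactly where commutativity of $H$ enters. For \eqref{1f}, the $N$-components of both sides agree by \eqref{1f} on $N$, but after expansion the two summands on the right-hand side carry $H$-factors $x_{(1)} y_{(1)} z_{(1)} h$ and $y_{(1)} x_{(1)} z_{(1)} h$ respectively, which agree with the single left-hand factor $x_{(1)} y_{(1)} z_{(1)} h$ exactly when $H$ is commutative. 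Identity \eqref{1g} is treated the same way, after using \eqref{2a} to rewrite $\{x, y, z\}_{(1)}$ as $x_{(1)} y_{(1)} z_{(1)}$ and then reordering by commutativity.

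The main obstacle is the bookkeeping: carrying the two independent Sweedler indices (one from the $H$-coaction on $A$, the other from $\Delta_H$ acting on $N \ot H$) through the calculation without collision, and pinpointing precisely where each of the structural hypotheses (the comodule algebra structure on $A$, commutativity of the product on $A$, commutativity of $H$, and the Poisson 3-Lie axioms on $N$) is invoked.
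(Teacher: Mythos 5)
Your proposal follows essentially the same route as the paper's own proof: verify each axiom by Sweedler expansion and reduce it to the corresponding identity on $N$, using coassociativity of $\Delta_H$ for the coaction and the compatibility \eqref{2b}, and the Remark-style reordering of the $H$-factors attached to bracket terms, exactly as the paper does (at the same level of bookkeeping about the order of the $x_{(1)},y_{(1)},u_{(1)},v_{(1)}$ factors). You also locate the use of commutativity of $H$ in precisely the same place as the paper, namely the Poisson compatibilities \eqref{1f} and \eqref{1g} in part (2), so the argument is correct and not a genuinely different approach.
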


\begin{proof}
(1) For all $x,y,u,v\in A,n\ot h\in N\ot H$, first of all,
\begin{align*}
&(x,y)\di((u,v)\di (n\ot h))-(u,v)\di((x,y)\di (n\ot h))\\
&=(x,y)\di((u_{(0)},v_{(0)})\di n\ot u_{(1)}v_{(1)}h))-(u,v)\di(x_{(0)},y_{(0)})\di n\ot x_{(1)}y_{(1)}h)\\
&=(x_{(0)},y_{(0)})\di((u_{(0)},v_{(0)})\di n)\ot x_{(1)}y_{(1)}u_{(1)}v_{(1)}h\\
&\quad -(u_{(0)},v_{(0)})\di((x_{(0)},y_{(0)})\di n)\ot u_{(1)}v_{(1)}x_{(1)}y_{(1)}h\\
&=[(\{x_{(0)},y_{(0)},u_{(0)}\},v_{(0)})\di n+(u_{(0)},\{x_{(0)},y_{(0)},v_{(0)}\})\di n]\ot u_{(1)}x_{(1)}y_{(1)}v_{(1)}h\\
&=(\{x_{(0)},y_{(0)},u_{(0)}\},v_{(0)})\di n\ot x_{(1)}y_{(1)}u_{(1)}v_{(1)}h+(u_{(0)},\{x_{(0)},y_{(0)},v_{(0)}\})\di n\ot u_{(1)}x_{(1)}y_{(1)}v_{(1)}h\\
&=(\{x,y,u\},v)\di(n\ot h)+(u,\{x,y,v\})\di(n\ot h),
\end{align*}
then
\begin{align*}
&(\{x,y,u\},v)\di(n\ot h)+(x,u)\di((y,v)\di (n\ot h))\\
&=(\{x,y,u\}_{(0)},v_{(0)})\di n\ot \{x,y,u\}_{(1)}v_{(1)}h+(x,u)\di((y_{(0)},v_{(0)})\di n\ot y_{(1)}v_{(1)}h)\\
&=(\{x_{(0)},y_{(0)},u_{(0)}\},v_{(0)})\di n\ot x_{(1)}y_{(1)}u_{(1)}v_{(1)}h+(x_{(0)},u_{(0)})\di((y_{(0)},v_{(0)})\di n)\ot x_{(1)}u_{(1)}y_{(1)}v_{(1)}h\\
&=[(\{x_{(0)},y_{(0)},u_{(0)}\},v_{(0)})\di n+(x_{(0)},u_{(0)})\di((y_{(0)},v_{(0)})\di n)]\ot x_{(1)}y_{(1)}u_{(1)}v_{(1)}h\\
&=[(y_{(0)},u_{(0)})\di((x_{(0)},v_{(0)})\di n)+(x_{(0)},y_{(0)})\di ((u_{(0)},v_{(0)})\di n)]\ot x_{(1)}y_{(1)}u_{(1)}v_{(1)}h\\
&=(y_{(0)},u_{(0)})\di((x_{(0)},v_{(0)})\di n)\ot y_{(1)}u_{(1)}x_{(1)}v_{(1)}h+(x_{(0)},y_{(0)})\di ((u_{(0)},v_{(0)})\di n)\ot x_{(1)}y_{(1)}u_{(1)}v_{(1)}h\\
&=(y,u)\di((x,v)\di(n\ot h))+(x,y)\di((u,v)\di(n\ot h)).
\end{align*}
And
\begin{align*}
&((x,y)\di (n\ot h))_{(0)}\ot((x,y)\di (n\ot h))_{(1)}\\
&=((x_{(0)},y_{(0)})\di n\ot x_{(1)}y_{(1)}h)_{(0)}\ot((x_{(0)},y_{(0)})\di n\ot x_{(1)}y_{(1)}h)_{(1)}\\
&=(x_{(0)},y_{(0)})\di n\ot (x_{(1)}y_{(1)}h)_{1}\ot(x_{(1)}y_{(1)}h)_{2}\\
&=(x_{(0)},y_{(0)})\di n\ot x_{(1)1}y_{(1)1}h_1\ot x_{(1)2}y_{(1)2}h_2\\
&=(x_{(0)(0)},y_{(0)(0)})\di n\ot x_{(0)(1)}y_{(0)(1)}h_1\ot x_{(1)}y_{(1)}h_2\\
&=(x,y)_{(0)}\di (n\ot h)_{(0)}\ot(x,y)_{(1)} (n\ot h)_{(1)}.
\end{align*}
Thus $N\ot H$ is an $(\underline{A},H)$-comodule.

(2) For all  $x,y,z\in A,n\ot h\in N\ot H$,
\begin{align*}
(xy,z)\di (n\ot h)&=(x_{(0)}y_{(0)},z_{(0)})n\ot x_{(1)}y_{(1)}z_{(1)}h\\
&=x_{(0)}\cdot((y_{(0)},z_{(0)})\di n)\ot x_{(1)}y_{(1)}z_{(1)}h+y_{(0)}\cdot((x_{(0)},z_{(0)})\di n)\ot x_{(1)}y_{(1)}z_{(1)}h\\
&=x\cdot((y_{(0)},z_{(0)})\di n\ot y_{(1)}z_{(1)}h)+y\cdot((x_{(0)},z_{(0)})\di n\ot x_{(1)}z_{(1)}h)\\
&=x\cdot((y,z)\di(n\ot h))+y\cdot((x,z)\di(n\ot h)),
\end{align*}
and we have
\begin{align*}
(x,y)\di(z\cdot (n\ot h))&=(x,y)\di(z_{(0)}\cdot n\ot z_{(1)}h)\\
&=(x_{(0)},y_{(0)})\di (z_{(0)}\cdot n)\ot x_{(1)}y_{(1)}z _{(1)}h\\
&=\{x_{(0)},y_{(0)},z_{(0)}\}\cdot n\ot x_{(1)}y_{(1)}z _{(1)}h+z_{(0)}\cdot((x_{(0)},y_{(0)})\di n)\ot x_{(1)}y_{(1)}z _{(1)}h\\
&=\{x,y,z\}\cdot(n\ot h)+z\cdot((x,y)\di(n\ot h)),
\end{align*}
where we used the commutativity of $H$. By (1), the proof is completed.
\end{proof}

\begin{lemma}\label{lem:2d}
\begin{itemize}
  \item [(1)] Let $M$ be an $(\underline{A},H)$-comodule and $N$ a Lie $A$-module. There exists a linear isomorphism
$$\gamma:\! _{\underline{A}}\hbox{Hom}^H(M,N\ot H)\rightarrow \! _{\underline{A}}\hbox{Hom}(M,N),\ f\mapsto(id\ot\varepsilon)\circ f.$$
The inverse $\gamma'$ of $\gamma$ is given by $\gamma'(g)=(g\ot id_H)\circ\rho$.
  \item [(2)] Let $H$ be commutative, $M$ a Poisson 3-Lie $(A, H)$-Hopf module and $N$ a Poisson 3-Lie $A$-module. There exists a linear isomorphism
$$\gamma:\! _{\mathcal{P}A}\hbox{Hom}^H(M,N\ot H)\rightarrow\! _{\mathcal{P}A}\hbox{Hom}(M,N),\ f\mapsto(id\ot\varepsilon)\circ f.$$
\end{itemize}
\end{lemma}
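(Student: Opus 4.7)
The plan is to prove both parts in parallel, since part (2) is part (1) plus one additional straightforward check for the commutative associative action. First I would take $\gamma$ and $\gamma'$ as given by the formulas and verify that each lands in the stated Hom-space, and then show the two are mutually inverse.

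For the well-definedness of $\gamma'$, given a 3-Lie $A$-linear map $g\colon M\to N$, I must check that $\gamma'(g)=(g\ot\id_H)\circ\rho_M$ is $H$-colinear and 3-Lie $A$-linear. The $H$-colinearity is immediate from the coassociativity of $\rho_M$. For 3-Lie $A$-linearity I would start with $\rho_M((x,y)\di m)=(x_{(0)},y_{(0)})\di m_{(0)}\ot x_{(1)}y_{(1)}m_{(1)}$ by \eqref{2b}, apply $g\ot\id_H$, pull $g$ inside the action via its 3-Lie $A$-linearity, and recognize the outcome as precisely the 3-Lie $A$-action on $N\ot H$ from Lemma \ref{lem:2c}(1). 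In part (2), the same template gives $A$-linearity for the commutative product, replacing \eqref{2b} by the standard $(A,H)$-Hopf module relation $\rho_M(x\cdot m)=x_{(0)}\cdot m_{(0)}\ot x_{(1)}m_{(1)}$ and matching against the action defined in Lemma \ref{lem:2c}(2); commutativity of $H$ enters only through its role in that lemma.

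For the well-definedness of $\gamma$, applying $\id\ot\varepsilon$ collapses the $H$-component of the action formulas on $N\ot H$ via $\varepsilon(ab)=\varepsilon(a)\varepsilon(b)$, so the 3-Lie (resp. commutative) $A$-action on $N\ot H$ reduces to the corresponding action on $N$, making $\gamma(f)$ a 3-Lie (resp. Poisson 3-Lie) $A$-module map whenever $f$ is. For the inverse relations, the counit axiom on $M$ gives
$$\gamma(\gamma'(g))(m)=(\id\ot\varepsilon)(g(m_{(0)})\ot m_{(1)})=g(m_{(0)}\varepsilon(m_{(1)}))=g(m),$$
while for the other composition I would use the $H$-colinearity of $f$, writing $\rho_{N\ot H}(f(m))=(f\ot\id_H)(\rho_M(m))$, and apply $\id\ot\varepsilon\ot\id_H$: the left side collapses to $f(m)$ by the counit axiom on $N\ot H$, while the right side becomes $\gamma(f)(m_{(0)})\ot m_{(1)}=\gamma'(\gamma(f))(m)$.

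The main obstacle is the 3-Lie $A$-linearity of $\gamma'(g)$: every other step is either the counit axiom or a one-line computation, but this step is the only place where \eqref{2b} and the carefully engineered 3-Lie action on $N\ot H$ from Lemma \ref{lem:2c}(1) must be chased together. Because that lemma was set up precisely to make the match work, the verification is essentially a single application of \eqref{2b} followed by the 3-Lie $A$-linearity of $g$; no genuinely new ingredient beyond the earlier results is needed.
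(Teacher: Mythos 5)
Your proposal is correct and follows essentially the same route as the paper: well-definedness of $\gamma$ and $\gamma'$ is checked exactly as in the paper via \eqref{2b}, the linearity of $g$ (resp.\ $f$), and the module structure on $N\ot H$ from Lemma \ref{lem:2c}, with part (2) reduced to the extra $A$-linearity check. The only difference is that you write out the mutual-inverse verification (counit axiom plus $H$-colinearity of $f$) explicitly, whereas the paper delegates that computation to \cite{Gu}; both computations agree.
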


\begin{proof}
(1) By Lemma \ref{lem:2c}, $N \otimes H$ is an $(\underline{A}, H)$-comodule. Let $f \in\! _{\mathcal{P}A}Hom^H(M, N \otimes H)$ and $m \in M$. Denote $ f(m) = \sum_{i} (n_i \otimes h_i)$ for some $n_i \in N$ and $h_i \in H$. Then $\gamma(f)(m) = \sum_{i \in I} (n_i \varepsilon(h_i))$. For $x,y\in A$, $m \in M$, we have
$$f((x,y)\di m)=(x,y)\diamond f(m)=\sum_{i} (x_{(0)},y_{(0)}) \di n_i\otimes x_{(1)}y_{(1)}h_i,$$
so
\begin{align*}
\gamma(f)((x,y)\di m)&=(id\ot\varepsilon)f((x,y)\di m)\\
&=(x_{(0)},y_{(0)}) \di n_i\varepsilon(x_{(1)}y_{(1)}h_i)\\
&=(x,y) \di n_i\varepsilon(h_i)\\
&=(x,y) \di \gamma(f)(m),
\end{align*}
thus $\gamma(f)$ is Lie $A$-linear. Now for $g\in\! _{\underline{A}}\hbox{Hom}(M,N).$ It is clear that $\gamma'(g)$ is a morphism of $H$-comodule. For all $x,y\in A,m\in M$, we have
\begin{align*}
\gamma'(g)((x,y)\di m)&=g(((x,y)\di m)_{(0)})\ot((x,y)\di m)_{(1)}\\
&=g((x_{(0)},y_{(0)})\di m_{(0)})\ot x_{(1)}y_{(1)} m_{(1)}\\
&=(x_{(0)},y_{(0)})\di g( m_{(0)})\ot  x_{(1)}y_{(1)}  m_{(1)}\\
&=(x,y)\di (g( m_{(0)})\ot m_{(1)})\\
&=(x,y)\di (\gamma'(g)(m)),
\end{align*}
thus $\gamma'(g)$ is Lie $A$-linear. 
By the same computations as \cite{Gu}, we obtain that $\gamma'$ is the inverse of $\gamma$.

(2) Since $N\ot H$ is a Poisson 3-Lie $(A,H)$-Hopf module, and by a straightforward computation we get that $\gamma(f)$ and $\gamma'(g)$ are $A$-linear. The conclusion follows.
\end{proof}

The following result is direct by Lemma \ref{lem:2d}.
\begin{corollary}\label{cor:2c}
Let $H$ be commutative. If $N$ is an injective Poisson 3-Lie $A$-module, then $N\ot H$ is an injective Poisson 3-Lie $(A, H)$-Hopf module.
\end{corollary}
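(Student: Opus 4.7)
The plan is to reduce the claim to the injectivity of $N$ in the category of Poisson 3-Lie $A$-modules via the natural isomorphism $\gamma$ of Lemma \ref{lem:2d}(2). That isomorphism exhibits the functor $(-)\ot H$ as right adjoint to the forgetful functor from the category of Poisson 3-Lie $(A,H)$-Hopf modules to the category of Poisson 3-Lie $A$-modules, and right adjoints of exact functors automatically preserve injective objects, so the conclusion is essentially formal.

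Concretely, I would take a monomorphism $\iota:M\hookrightarrow M'$ in the category of Poisson 3-Lie $(A,H)$-Hopf modules together with a morphism $f:M\to N\ot H$ in the same category, and produce an extension $\tilde f:M'\to N\ot H$ with $\tilde f\circ\iota=f$. First I would apply $\gamma$ from Lemma \ref{lem:2d}(2) to obtain $\gamma(f)=(id\ot\varepsilon)\circ f:M\to N$, a morphism of Poisson 3-Lie $A$-modules. After forgetting the $H$-coaction, $\iota$ remains an injective morphism of Poisson 3-Lie $A$-modules, so injectivity of $N$ provides $g:M'\to N$ with $g\circ\iota=\gamma(f)$. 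Setting $\tilde f=\gamma'(g)$, Lemma \ref{lem:2d}(2) guarantees that $\tilde f$ lies in $_{\mathcal{P}A}\hbox{Hom}^H(M',N\ot H)$.

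To verify the extension property $\tilde f\circ\iota=f$, I would unwind $\gamma'$ using the $H$-colinearity of $\iota$: for $m\in M$,
$$\tilde f(\iota(m))=g(\iota(m)_{(0)})\ot\iota(m)_{(1)}=g(\iota(m_{(0)}))\ot m_{(1)}=\gamma(f)(m_{(0)})\ot m_{(1)}=(\gamma'\circ\gamma)(f)(m)=f(m),$$
where the last equality uses that $\gamma'$ is inverse to $\gamma$. The only point requiring any attention is that the forgetful functor sends monomorphisms to monomorphisms, but this is immediate since it is faithful and preserves kernels. I do not expect any substantive obstacle, as the real work has already been absorbed into Lemma \ref{lem:2d}.
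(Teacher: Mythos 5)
Your argument is correct and is exactly the route the paper intends: the paper declares the corollary ``direct by Lemma \ref{lem:2d}'', and your proof simply spells out the standard extension argument through the isomorphism $\gamma$ (transport the extension problem to $N$ via $\gamma$, solve it there by injectivity of $N$, and pull back with $\gamma'$, using $H$-colinearity of the inclusion to check compatibility). No substantive difference from the paper's approach.
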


\begin{theorem}\label{thm:2d}
Let $A$ be a $H$-comodule Poisson 3-Lie algebra, and $M$ a Poisson 3-Lie $(A, H)$-Hopf module. Assume that there exists an $H$-colinear map $\phi:H\rightarrow A^A$ such that $\phi(1_H)=1_A$.
\begin{itemize}
  \item [(1)]  If $H$ is commutative or $\phi$ is an algebra map, then every Poisson 3-Lie $(A, H)$-Hopf module which is injective as a Lie $A$-module is an injective $(\underline{A}, H)$-comodule.
  \item [(2)] If $H$ is commutative, then every Poisson 3-Lie $(A, H)$-Hopf module which is injective Poisson 3-Lie $A$-module is an injective Poisson 3-Lie $(A, H)$-Hopf module.
\end{itemize}
\end{theorem}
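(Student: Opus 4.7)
The plan is to realize $M$ as a direct summand of $M\otimes H$ (equipped with the structure of Lemma~\ref{lem:2c}) in the appropriate category; injectivity of $M$ then descends from injectivity of $M\otimes H$, which is provided by Corollary~\ref{cor:2c} in case (2) and by its evident Lie $A$-module analogue (same proof, using Lemma~\ref{lem:2d}(1)) in case (1). It therefore suffices to exhibit a retraction of the coaction $\rho_M\colon M\to M\otimes H$ in the appropriate category of Hopf modules; note that $\rho_M$ itself is a morphism in that category as a direct consequence of \eqref{2b} and the $(A,H)$-Hopf axioms.

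Using $\phi$, I would define the candidate retraction
\[
\pi\colon M\otimes H\longrightarrow M,\qquad \pi(m\ot h):=\phi\bigl(S(m_{(1)})\,h\bigr)\cdot m_{(0)}.
\]
By coassociativity one has $m_{(0)(0)}\ot m_{(0)(1)}\ot m_{(1)}=m_{(0)}\ot m_{(1)1}\ot m_{(1)2}$, and combining the antipode identity $S(m_{(1)1})\,m_{(1)2}=\varepsilon(m_{(1)})\,1_H$ with $\phi(1_H)=1_A$ gives $\pi\circ\rho_M=\id_M$. The $H$-colinearity of $\pi$ then follows routinely from the $H$-colinearity of $\phi$, the identity $\Delta\circ S=(S\ot S)\circ\tau\circ\Delta$, and the $(A,H)$-Hopf compatibility of the associative $A$-action on $M$.

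The heart of the matter is verifying that $\pi$ respects the 3-Lie $A$-action, and, in part (2), also the associative $A$-action. Expanding $\pi\bigl((x,y)\diamond(m\ot h)\bigr)$ via Lemma~\ref{lem:2c}(1) and the Hopf axiom~\eqref{2b} produces inside $\phi$ a word of the form $S(m_{(1)})\,S(y_{(1)1})\,S(x_{(1)1})\,x_{(1)2}\,y_{(1)2}\,h$, multiplied against $(x_{(0)},y_{(0)})\diamond m_{(0)}$. Two ingredients reduce this to $(x,y)\diamond \pi(m\ot h)$. First, the assumption $\phi(H)\subseteq A^A$ together with~\eqref{1g} forces $\{x,y,\phi(k)\}=0$ for every $k\in H$, so the 3-Lie action can be pulled across the $\phi$-factor. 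Second, one must collapse $S(x_{(1)1})\,x_{(1)2}$ to $\varepsilon(x_{(1)})\,1_H$ (and similarly for $y$) although these factors sit inside a non-commutative product: if $H$ is commutative we rearrange the factors directly in $H$; if instead $\phi$ is an algebra map we distribute $\phi$ across the product and then commute the resulting $\phi$-values inside the commutative algebra~$A$, applying $\phi(S(x_{(1)1}))\,\phi(x_{(1)2})=\varepsilon(x_{(1)})\,1_A$. For part (2), $A$-linearity of $\pi$ is handled by the same antipode cancellation on the single $x$-factor arising from $x\cdot(m\ot h)=x_{(0)}\cdot m\ot x_{(1)}h$, where commutativity of $H$ again juxtaposes $S(x_{(1)1})$ and $x_{(1)2}$.

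The principal obstacle is thus the antipodal bookkeeping in the third step: the dichotomy ``$H$ commutative or $\phi$ algebra map'' in part~(1) is exactly what makes the cancellation $S(\cdot_1)\cdot_2=\varepsilon$ available inside a non-commutative product, while the stricter hypothesis of part~(2) is forced by Lemma~\ref{lem:2c}(2), which already requires $H$ commutative in order to equip $M\ot H$ with a Poisson 3-Lie $(A,H)$-Hopf module structure at all.
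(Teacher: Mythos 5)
Your overall strategy is the same as the paper's: split the coaction $\rho_M\colon M\to M\ot H$ in the relevant category and transfer injectivity from $M\ot H$ via Lemma \ref{lem:2d}(1) (resp.\ Corollary \ref{cor:2c}). However, your specific retraction has a genuine gap. With $\pi(m\ot h)=\phi(S(m_{(1)})h)\cdot m_{(0)}$, the $H$-colinearity you call ``routine'' fails for noncommutative $H$: using the Hopf-module axiom, the colinearity of $\phi$, and $\Delta(S(k))=S(k_2)\ot S(k_1)$, one gets
\begin{equation*}
\rho_M(\pi(m\ot h))=\phi(S(m_{(1)3})h_1)\cdot m_{(0)}\ot S(m_{(1)2})\,h_2\,m_{(1)1},
\end{equation*}
and the second tensor factor collapses to $\varepsilon(m_{(1)})h_2$ only if one can move $h_2$ past $S(m_{(1)2})$ (and use $S^2=\id$), i.e.\ essentially only when $H$ is commutative. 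Since this obstruction lives in the $H$-leg and never passes through $\phi$, the alternative hypothesis ``$\phi$ is an algebra map'' cannot repair it, so your argument does not cover that branch of part (1). Relatedly, you misplace where the dichotomy is actually needed: in your word $S(m_{(1)})S(y_{(1)1})S(x_{(1)1})x_{(1)2}y_{(1)2}h$ the pairs $S(x_{(1)1})x_{(1)2}$ and then $S(y_{(1)1})y_{(1)2}$ are already adjacent, so the 3-Lie linearity of your $\pi$ holds with no hypothesis at all; the real use of the hypothesis has been silently shifted into the colinearity step, where only commutativity of $H$ helps.

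The paper avoids this by taking the opposite arrangement, $\lambda(m\ot h)=\phi\bigl(hS^{-1}(m_{(1)})\bigr)\cdot m_{(0)}$, for which the two $m_{(1)}$-legs end up adjacent in the $H$-factor, giving $h_2S^{-1}(m_{(1)2})m_{(1)1}=\varepsilon(m_{(1)})h_2$, so $H$-colinearity (and $\lambda\circ\rho_M=\id_M$) holds with no commutativity assumption; the price is that in the 3-Lie linearity check the antipode pairs are now separated inside $\phi$, and this is precisely where ``$H$ commutative or $\phi$ an algebra map'' (together with commutativity of $A$) is invoked, exactly as in your third step but for the correct formula. If you replace your $\pi$ by this $\lambda$ and rerun your computations, the rest of your argument (the splitting, Lemma \ref{lem:2c}, Lemma \ref{lem:2d}(1), Corollary \ref{cor:2c}, and the remark that part (2) needs $H$ commutative already to make $M\ot H$ a Poisson 3-Lie $(A,H)$-Hopf module) goes through and coincides with the paper's proof.
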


\begin{proof}
(1) Let $M$ be a Poisson 3-Lie $(A, H)$-Hopf module and consider the linear map $\lambda:M\ot H\rightarrow M$ defined by
$$\lambda(m\ot h)=\phi(hS^{-1}(m_{(1)}))\cdot m_{(0)},\quad \forall m\in M,h\in H.$$
Just as shown in \cite{Gu}, $\lambda\circ\rho_M=id_M$ and $\lambda$ is $H$-colinear, which means that $\rho_M$ is an injective map. 

For all $x,y\in A,m\in M,h\in H$, we have
\begin{align*}
\lambda((x,y)\di (m\ot h))&=\lambda((x_{(0)},y_{(0)})\di m\ot x_{(1)}y_{(1)}h)\\
&=\phi(x_{(1)}y_{(1)}hS^{-1}(((x_{(0)},y_{(0)})\di m)_{(1)}))\cdot ((x_{(0)},y_{(0)})\di m)_{(0)}\\
&=\phi(x_{(1)}y_{(1)}hS^{-1}(x_{(0)(1)}y_{(0)(1)} m_{(1)}))\cdot ((x_{(0)(0)},y_{(0)(0)})\di m_{(0)})\\
&=\phi(x_{(1)2}y_{(1)2}hS^{-1}(x_{(1)1}y_{(1)1} m_{(1)}))\cdot ((x_{(0)},y_{(0)})\di m_{(0)})\\
\end{align*}
and
\begin{align*}
(x,y)\di\lambda(m\ot h)&=(x,y)\di(\phi(hS^{-1}(m_{(1)}))\cdot m_{(0)})\\
&=\{x,y,\phi(hS^{-1}(m_{(1)}))\}\cdot m_{(0)}+\phi(hS^{-1}(m_{(1)}))\cdot((x,y)\di m_{(0)})\\
&=\phi(hS^{-1}(m_{(1)}))\cdot((x,y)\di m_{(0)}),
\end{align*}
where the last identity holds since $\phi(hS^{-1}_H(m_{(1)}))\in A^A$. Now if $H$ is commutative, we obtain
$$\lambda((x,y)\di (m\ot h))=(x,y)\di\lambda(m\ot h),$$
that is, $\lambda$ is Lie $A$-linear and a homomorphism in $_{\underline{A}}\mathcal{M}^H$. If $\phi$ is an algebra map, using the fact that $A$ is commutative, we also have $\lambda((x,y)\di (m\ot h))=(x,y)\di\lambda(m\ot h).$ Thus $\lambda$ is Lie $A$-linear and a homomorphism in $_{\underline{A}}\mathcal{M}^H$. For all $x,y\in A,m\in M$,
\begin{align*}
\rho_M((x,y)\di m)&=((x,y)\di m)_{(0)}\ot((x,y)\di m)_{(1)}\\
&=(x_{(0)},y_{(0)})\di m_{(0)}\ot x_{(1)}y_{(1)} m_{(1)}\\
&=(x,y)\di \rho_M(m),
\end{align*}
hence $\rho_M$ is a 3-Lie $A$-linear.
 It is easy to verify that $\rho_M$ is $H$-colinear, so it is a homomorphism of $(\underline{A},H)$-comodule. If $M$ is an injective 3-Lie $A$-module, by a similar argument as in \cite{Gu}, $M$ is an injective $(\underline{A},H)$-comodule.

(2) By a similar computation as above, we obtain that $\lambda$ given in (1) is $A$-linear. Thus $\lambda$ is a homomorphism of Poisson 3-Lie $(A, H)$-Hopf modules. By the same arguments as \cite{Gu}, we conclude that an injective Poisson 3-Lie $A$-module is also an injective Poisson 3-Lie $(A,H)$-Hopf module.
\end{proof}

\begin{corollary}
Let $A$ be a commutative $H$-comodule algebra, and $M$ an $(A, H)$-Hopf module. Assume that there is an $H$-colinear map $\phi:H\rightarrow A$ such that $\phi(1_H)=1_A$.
\begin{itemize}
  \item [(1)] Then every $(A, H)$-Hopf module is an injective $H$-comodule.
  \item [(2)] Let $H$ be commutative. Then every $(A, H)$-Hopf module which is injective as an $A$-module is an injective $(A, H)$-Hopf module.
\end{itemize}
\end{corollary}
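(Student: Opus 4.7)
The plan is to deduce both parts by specializing Theorem \ref{thm:2d} to the case where $A$ carries the trivial 3-Lie bracket $\{x,y,z\}=0$. With this bracket the fundamental identity \eqref{1a}, the Leibniz rule \eqref{1e}, and the coaction compatibility \eqref{2a} are vacuously satisfied, so the commutative $H$-comodule algebra $A$ is an $H$-comodule Poisson 3-Lie algebra, and $A^A=A$. Thus the hypothesis of Theorem \ref{thm:2d} that there is an $H$-colinear $\phi:H\to A^A$ with $\phi(1_H)=1_A$ is exactly the hypothesis of the corollary. Any $(A,H)$-Hopf module $M$ becomes a Poisson 3-Lie $(A,H)$-Hopf module under the trivial 3-Lie action $(x,y)\di m=0$, and in this specialization the category $_{\underline{A}}\mathcal{M}^H$ coincides with $\mathcal{M}^H$, while the category of Poisson 3-Lie $(A,H)$-Hopf modules coincides with the category of $(A,H)$-Hopf modules.

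Part (2) is now immediate: with $H$ commutative, an $A$-module that is injective as such is, in view of the trivial 3-Lie action, injective as a Poisson 3-Lie $A$-module, and an injective Poisson 3-Lie $(A,H)$-Hopf module is an injective $(A,H)$-Hopf module. Hence Theorem \ref{thm:2d}(2) supplies the conclusion directly.

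The main subtlety appears in part (1), since the corollary does not impose either of the alternatives ``$H$ commutative'' or ``$\phi$ an algebra map'' that Theorem \ref{thm:2d}(1) lists. To bridge this gap I would inspect the proof of that theorem: the retract $\lambda:M\ot H\to M$, $\lambda(m\ot h)=\phi(hS^{-1}(m_{(1)}))\cdot m_{(0)}$, is shown to be $H$-colinear and to satisfy $\lambda\circ\rho_M=\id_M$ using only the $H$-colinearity of $\phi$ and $\phi(1_H)=1_A$; the commutativity of $H$ or multiplicativity of $\phi$ is invoked solely to establish the Lie $A$-linearity of $\lambda$. Under the trivial 3-Lie bracket, Lie $A$-linearity is automatic for every linear map, so that extra hypothesis becomes redundant in our specialization. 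Consequently $\rho_M$ admits an $H$-colinear section, exhibiting $M$ as a direct summand of $M\ot H$ in $\mathcal{M}^H$; and since $M\ot H$ is an injective $H$-comodule (by the classical adjunction between $\mathcal{M}^H$ and vector spaces, or equivalently by Lemma \ref{lem:2d}(1) specialized to the trivial 3-Lie structure, whose Hom-isomorphism then reduces to Hom from $H$-comodules to vector spaces), $M$ is injective in $\mathcal{M}^H$.
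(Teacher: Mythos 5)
Your proposal is correct and follows essentially the same route as the paper: equip the commutative algebra $A$ with the trivial $3$-Lie bracket, so that $A=A^A$ and every $(A,H)$-Hopf module becomes a Poisson $3$-Lie $(A,H)$-Hopf module with trivial Lie action, then invoke Theorem \ref{thm:2d}; in particular, your observation that the alternatives ``$H$ commutative or $\phi$ an algebra map'' enter only through the Lie $A$-linearity of $\lambda$, which is automatic for the trivial action, is exactly the paper's remark that $\lambda$ is $3$-Lie $A$-linear in this situation, and your explicit splitting of $\rho_M$ in $\mathcal{M}^H$ together with the injectivity of $M\otimes H$ as an $H$-comodule is the intended content of the paper's terse ``the result follows from Theorem \ref{thm:2d}(1)''. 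The one overstated step is the claim in part (2) that injectivity of $M$ as an $A$-module already gives injectivity as a Poisson $3$-Lie $A$-module with trivial action (injectivity does not automatically pass from a full subcategory to the larger category); this step is unnecessary, since, exactly as in your part (1), for commutative $H$ the map $\lambda$ splits $\rho_M$ inside the category of $(A,H)$-Hopf modules and $M\otimes H$ is an injective $(A,H)$-Hopf module by Lemma \ref{lem:2d}(2) and Corollary \ref{cor:2c} specialized to the trivial Lie action, which is also the level of detail at which the paper leaves its own proof.
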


\begin{proof}
(1) Since any commutative algebra $A$ is a Poisson 3-Lie algebra with a trivial Poisson bracket, we have $A=A^A$, and any commutative $H$-comodule algebra $A$ is an $H$-comodule Poisson 3-Lie algebra. Any $H$-comodule is an $(\underline{A}, H)$-comodule with a trivial Lie $A$-action, and any $(A, H)$-Hopf module is a Poisson 3-Lie $(A, H)$-Hopf module with a trivial Lie $A$-action. So the map $\lambda$ in the proof of Theorem \ref{thm:2d} is 3-Lie $A$-linear. Also any homomorphism of $H$-comodules is a homomorphism of $(\underline{A}, H)$-comodules. The result follows from Theorem \ref{thm:2d}(1).

(2) Since any homomorphism of $(A, H)$-Hopf modules is a homomorphism of Poisson 3-Lie $(A, H)$-Hopf modules, the result follows from Theorem \ref{thm:2d}(2).
\end{proof}

\vspace{2mm}

\section{Fundamental theorem of Poisson 3-Lie Hopf modules}
\def\theequation{3.\arabic{equation}}
\setcounter{equation} {0}

In what follows, we will denote $B=A^{AcoH}$.
\begin{lemma}\label{lem:3a}
Let $M$ be a Poisson 3-Lie $B$-module. Then $A\ot_BM$ is a Poisson 3-Lie $(A,H)$-Hopf module, where the $A$-action and the 3-Lie $A$-action are given by
\begin{align*}
&x\cdot (a\ot_B m)=xa\ot_Bm\\
&(x,y)\di (a\ot_B m)=\{x,y, a\}\ot_Bm,
\end{align*}
and the $H$-coaction is given by
$$\rho_{A\ot_BM}(a\ot_B m)=a_{(0)}\ot_Bm\ot a_{(1)},$$
for all $a,x,y\in A,m\in M$.
\end{lemma}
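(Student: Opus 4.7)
The plan is to verify that each of the three pieces of structure on $A\otimes_B M$---the $A$-action, the 3-Lie $A$-action, and the $H$-coaction---is well defined modulo the relations in the balanced tensor product, and then to check the module/comodule compatibility axioms one at a time, bootstrapping each from the corresponding axiom on $A$ (together with the Poisson 3-Lie structure of $A$). The crucial point throughout is that $B=A^{AcoH}$ consists of elements $b$ with $\{A,A,b\}=0$ and $\rho_A(b)=b\otimes 1_H$, and also $B\subseteq A$ is a subalgebra by Lemma~\ref{lem:2a} and Lemma~\ref{lem:2b}.

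First I would check well-definedness. For the $A$-action $x\cdot(a\otimes_B m)=xa\otimes_B m$, this is immediate from associativity. For the 3-Lie action, given $b\in B$ I would use the Poisson identity (\ref{1e}) to write $\{x,y,ab\}=a\{x,y,b\}+\{x,y,a\}b=\{x,y,a\}b$, since $b\in A^A$; hence $\{x,y,ab\}\otimes_B m=\{x,y,a\}b\otimes_B m=\{x,y,a\}\otimes_B b\cdot m$, which is precisely $B$-balancedness. For the $H$-coaction, using that $\rho_A$ is an algebra map together with $b\in A^{coH}$ I would get $\rho_A(ab)=a_{(0)}b\otimes a_{(1)}$, giving $\rho_A(ab)\otimes_B m=a_{(0)}\otimes_B bm\otimes a_{(1)}$, which again matches.

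Next I would verify the module axioms. That $A\otimes_B M$ is a unital $A$-module is immediate. The skew-symmetry (\ref{1b}), the identities (\ref{1c}) and (\ref{1d}) defining a 3-Lie $A$-module, and both Poisson compatibility conditions (\ref{1f}) and (\ref{1g}) all pull back to the skew-symmetry, the Fundamental Identity (\ref{1a}), and the Leibniz rule (\ref{1e}) for $A$ itself (applied on the first tensor factor). For instance, (\ref{1f}) reduces to the identity $\{xy,z,a\}=x\{y,z,a\}+y\{x,z,a\}$, which follows from (\ref{1e}) together with the total skew-symmetry of the 3-Lie bracket; and (\ref{1g}) reduces to $\{x,y,za\}=\{x,y,z\}a+z\{x,y,a\}$, which is (\ref{1e}) directly.

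Finally, for the Hopf-module structure, the comodule axioms and the classical $(A,H)$-Hopf module compatibility $\rho(x\cdot\xi)=x_{(0)}\xi_{(0)}\otimes x_{(1)}\xi_{(1)}$ follow because $\rho_A$ is an algebra map, while the Poisson 3-Lie $(A,H)$-Hopf module relation (\ref{2b}) becomes exactly the comodule Poisson 3-Lie algebra relation (\ref{2a}) after tracking everything to the first tensor factor. The only genuine obstacle is the well-definedness step, and it is defused precisely by the definition $B=A^{AcoH}$; once that is in place, every remaining verification is a direct transcription of an axiom on $A$ tensored with $\id_M$.
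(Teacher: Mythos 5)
Your proposal is correct and follows essentially the same route as the paper: well-definedness of the 3-Lie action via the Leibniz rule \eqref{1e} together with $b\in A^A$ (and of the coaction via $b\in A^{coH}$), followed by transferring the module, Poisson-compatibility, and \eqref{2b} axioms from the corresponding identities \eqref{1a}, \eqref{1e}, \eqref{2a} on $A$ acting in the first tensor factor. You are in fact slightly more explicit than the paper, which treats the coaction well-definedness, the $(A,H)$-Hopf module structure, and the 3-Lie module axioms \eqref{1c}--\eqref{1d} as obvious.
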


\begin{proof}
It is obvious that the coaction is well defined and $A\ot_BM$ is an $(A,H)$-Hopf module. For all $a,x,y,z\in A,m\in M$. For $b\in B$,
\begin{align*}
(x,y)\di(ab\ot_Bm)&=(\{x,y,ab\})\ot_Bm\\
&\stackrel{(\ref{1e})}{=}(a\{x,y,b\}+\{x,y,a\}b)\ot_Bm\\
&=\{x,y,a\}b\ot_Bm\\
&=\{x,y,a\}\ot_Bb\cdot m\\
&=(x,y)\di(a\ot_Bb\cdot m),
\end{align*}
so the 3-Lie action is well defined. We have
\begin{align*}
(xy,z)\di(a\ot m)&=\{xy,z,a\}\ot m\\
&=[x\{y,z,a\}+\{x,z,a\}y]\ot m\\
&=x\{y,z,a\}\ot m+\{x,z,a\}y\ot m\\
&=x\cdot(\{y,z\}\di(a\ot m))+y\cdot((x,z)\di(a\ot m)),
\end{align*}
so the relation (\ref{1e}) is satisfied. 
\begin{align*}
(x,y)\di(z\cdot(a\ot m))&=(x,y)\di(za\ot m)\\
&=\{x,y,za\}\ot m\\
&=z\{x,y,a\}\ot m+\{x,y,z\}a\ot m\\
&=z\cdot((x,y)\di m)+\{x,y,z\}\cdot(a\ot m),
\end{align*}
so the relation (\ref{1f}) is satisfied. Thus $A\ot_BM$ is a Poisson 3-Lie $A$-module. Moreover 
\begin{align*}
\rho_{A\ot_BM}((x,y)\di (a\ot_B m))&=((x,y)\di (a\ot_B m))_{(0)}\ot((x,y)\di (a\ot_B m))_{(1)}\\
&=(\{x,y,a\}\ot m)_{(0)}\ot(\{x,y,a\}\ot m)_{(1)}\\
&=\{x,y,a\}_{(0)}\ot m\ot \{x,y,a\}_{(1)}\\
&=\{x_{(0)},y_{(0)},a_{(0)}\}\ot  m\ot x_{(1)}y_{(1)}a_{(1)}\\
&\stackrel{(\ref{2b})}{=}(x_{(0)},y_{(0)})\di(a_{(0)}\ot  m)\ot x_{(1)}y_{(1)}a_{(1)},
\end{align*}
the relation (\ref{2b}) is satisfied. The proof is completed.
\end{proof}

For every Poisson 3-Lie $(A,H)$-Hopf module $M$, from Lemma \ref{lem:2b} and Lemma \ref{lem:3a}, we deduce that $A\ot_BM^{AcoH}$ is a Poisson 3-Lie $(A,H)$-Hopf module.

\begin{lemma}\label{lem:3c}
Let $M$ be a Poisson 3-Lie $(A,H)$-Hopf module. Then the linear map $\alpha:A\ot_B M^{AcoH}\rightarrow M,\ a\ot_Mm\mapsto a\cdot m$ is a homomorphism of Poisson 3-Lie $(A,H)$-Hopf modules.
\end{lemma}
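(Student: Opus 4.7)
The plan is to verify the four conditions that make $\alpha$ a morphism in the category of Poisson 3-Lie $(A,H)$-Hopf modules: well-definedness over the $B$-balanced tensor product, associative $A$-linearity, 3-Lie $A$-linearity, and $H$-colinearity. Well-definedness is immediate: for $b\in B=A^{AcoH}$, associativity of the $A$-action on $M$ yields $\alpha(ab\ot_B m)=(ab)\cdot m=a\cdot(b\cdot m)=\alpha(a\ot_B b\cdot m)$. The $A$-linearity is equally direct from the definition $x\cdot(a\ot_B m)=xa\ot_B m$ in Lemma \mref{lem:3a}, since then $\alpha(x\cdot(a\ot_B m))=(xa)\cdot m=x\cdot\alpha(a\ot_B m)$.

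The substantive step is 3-Lie $A$-linearity. I would apply the Poisson compatibility \meqref{1g} to $a\cdot m$:
\[
(x,y)\di\alpha(a\ot_B m)=(x,y)\di(a\cdot m)=\{x,y,a\}\cdot m+a\cdot((x,y)\di m).
\]
The crucial observation is that by definition $M^{AcoH}\subseteq M^A$, so $(x,y)\di m=0$ and the second summand disappears. What remains is $\{x,y,a\}\cdot m=\alpha(\{x,y,a\}\ot_B m)=\alpha((x,y)\di(a\ot_B m))$, using the 3-Lie action on $A\ot_B M^{AcoH}$ from Lemma \mref{lem:3a}.

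For $H$-colinearity, the $(A,H)$-Hopf module structure of $M$ gives $\rho_M(a\cdot m)=a_{(0)}\cdot m_{(0)}\ot a_{(1)}m_{(1)}$. Since $m\in M^{coH}$, we have $m_{(0)}\ot m_{(1)}=m\ot 1_H$, so this collapses to $a_{(0)}\cdot m\ot a_{(1)}$, which equals $(\alpha\ot\id_H)(a_{(0)}\ot_B m\ot a_{(1)})=(\alpha\ot\id_H)\circ\rho_{A\ot_B M^{AcoH}}(a\ot_B m)$ by the definition of the coaction in Lemma \mref{lem:3a}.

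The argument presents no real obstacle, only a small subtlety: both halves of the invariance $m\in M^{AcoH}$ must be used simultaneously. The $A$-invariance eliminates the derivation term that would otherwise obstruct 3-Lie linearity, while the $H$-coinvariance is what allows the coaction on $m$ to be suppressed in the colinearity check so that the coactions on the two sides coincide.
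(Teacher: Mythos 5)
Your proof is correct and follows essentially the same route as the paper: well-definedness and $A$-linearity are immediate, 3-Lie linearity comes from the compatibility \eqref{1g} together with $(x,y)\di m=0$ for $m\in M^{AcoH}$, and $H$-colinearity from $m_{(0)}\ot m_{(1)}=m\ot 1_H$. You actually spell out the colinearity and well-definedness checks that the paper dismisses as "straightforward," which is fine; nothing further is needed.
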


\begin{proof}
Clearly $\alpha$ is well defined and a left $A$-module map. For $x,y,a\in A,m\in M^{AcoH}$,
\begin{align*}
\alpha((x,y)\di(a\ot_Bm))&=\alpha(\{x,y,a\}\ot_Bm)\\
&=\{x,y,a\}\cdot m\\
&=(x,y)\di(a\cdot m)-z\cdot((x,y)\di m)\\
&=(x,y)\di(a\cdot m)\\
&=(x,y)\di\alpha(a\ot_Bm),
\end{align*}
where the fourth equality holds since $(x,y)\di m=0$. Hence $\alpha$ is a 3-Lie $A$-module map. It is straightforward that $\alpha$ is $H$-colinear.
\end{proof}

Consider $H$ as a right $H$-comodule via $\Delta_H$ and assume that there exists a homomorphism of right $H$-comodule $\phi:H\rightarrow A$ such that $\phi(1_H)=1_A$. For any Poisson 3-Lie $(A,H)$-Hopf module $M$, define linear map
$$p_M:M\rightarrow M,\quad m\mapsto\phi(S^{-1}(m_{(1)}))\cdot m_{(0)}.$$

\begin{lemma}\cite{Gu}
With the above notations, we have
\begin{itemize}
  \item [(1)] Im$p_M=M^{coH}$.
  \item [(2)] $p^2_M=p_M$, and $p_M$ is a projection.
\end{itemize}
\end{lemma}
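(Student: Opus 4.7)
The plan is to verify both assertions by direct Sweedler computations, relying on the $(A,H)$-Hopf module compatibility, the $H$-colinearity of $\phi$, and the standard antipode identities $\Delta(S^{-1}(h))=S^{-1}(h_2)\otimes S^{-1}(h_1)$ and $\sum S^{-1}(h_2)h_1=\varepsilon(h)1_H$, both of which hold because $S$ is bijective.

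For (1), I would first dispatch the easy inclusion $M^{coH}\subseteq\mathrm{Im}\,p_M$, which in fact shows that $p_M$ restricts to the identity on $M^{coH}$: if $m\in M^{coH}$, then $\rho_M(m)=m\otimes 1_H$, so
$$p_M(m)=\phi(S^{-1}(1_H))\cdot m=\phi(1_H)\cdot m=1_A\cdot m=m.$$
This step already yields half of part (2) for free.

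The nontrivial direction is $\mathrm{Im}\,p_M\subseteq M^{coH}$, which I would establish by computing $\rho_M(p_M(m))$. Applying the Hopf-module compatibility $\rho_M(a\cdot n)=a_{(0)}\cdot n_{(0)}\otimes a_{(1)}n_{(1)}$ together with the $H$-colinearity relation $\phi(h)_{(0)}\otimes\phi(h)_{(1)}=\phi(h_1)\otimes h_2$ and the anti-coproduct rule for $S^{-1}$, the coaction on $p_M(m)$ rewrites in iterated Sweedler notation as
$$\rho_M(p_M(m))=\phi(S^{-1}(m_{(3)}))\cdot m_{(0)}\otimes S^{-1}(m_{(2)})m_{(1)}.$$
Using coassociativity to rearrange indices, the last tensor factor is then handled by the identity $\sum S^{-1}(h_2)h_1\otimes h_3=1_H\otimes h$, which collapses the whole expression to $p_M(m)\otimes 1_H$; hence $p_M(m)\in M^{coH}$, and therefore $\mathrm{Im}\,p_M=M^{coH}$.

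Finally, (2) is immediate from (1): since $p_M(m)\in M^{coH}$ and $p_M$ acts as the identity on $M^{coH}$, we obtain $p_M^2(m)=p_M(p_M(m))=p_M(m)$, so $p_M$ is an idempotent, i.e.\ a projection onto $M^{coH}$. The only real obstacle is notational --- keeping track of Sweedler indices across up to four levels of comultiplication and verifying that each antipode identity is applied to the correct slots --- but there is no conceptual difficulty beyond the standard toolbox of Hopf-module theory (cf.\ \cite{Gu}).
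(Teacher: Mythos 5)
Your proof is correct, and since the paper itself offers no proof of this lemma (it is quoted from \cite{Gu}), your argument simply reproduces the standard one behind that citation: $\phi(1_H)=1_A$ gives $p_M=\mathrm{id}$ on $M^{coH}$, while the Hopf-module compatibility, the colinearity of $\phi$, and the identity $\sum S^{-1}(h_2)h_1\otimes h_3=1_H\otimes h$ give $\rho_M(p_M(m))=p_M(m)\otimes 1_H$, whence $\mathrm{Im}\,p_M=M^{coH}$ and $p_M^2=p_M$. No gaps; this is exactly the intended argument.
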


Let $M\in\! _{\mathcal{P}A}\mathcal{M}^H$. For all $x,y\in A,m\in M$, put
$$(x,y)\di' m=p_M((x,y)\di m).$$

\begin{lemma}\label{lem:3d}
Let $H$ be a weak Hopf algebra. Let $M\in\! _{\mathcal{P}A}\mathcal{M}^H$. Suppose that there exists a right $H$-colinear algebra map $\phi:H\rightarrow A^A$, then for all $x,y,a\in A,m\in M$,
\begin{itemize}
  \item [(1)] $p_M(a\cdot m)=p_A(a)\cdot p_M(m),$
  \item [(2)] $(x,y)\di'p_M(m)=(x,y)\di' m,$
  \item [(3)] $(M,\di')$ is a 3-Lie $A$-module,
  \item [(4)] $(x,y)\di p_M(m)=\phi(x_{(1)}y_{(1)})\cdot ((x_{(0)},y_{(0)})\di'p_M(m))$,
  \item [(5)] $\phi(m_{(1)})\cdot p_M(m_{(0)})=m$.
\end{itemize}
\end{lemma}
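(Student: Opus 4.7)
\textbf{Proof plan for Lemma \ref{lem:3d}.}

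My plan is to prove the five items essentially in the order (1), (5), (2), (4), (3), since each subsequent item either relies on earlier ones or on the same two key algebraic facts. The two ingredients that will do most of the work are: (i) $\phi\colon H\to A^A$ is an algebra map, so $\phi(hk)=\phi(h)\phi(k)$, while the Poisson product on $A$ is commutative, so we may freely reorder $\phi$-images; and (ii) for any $a\in A^A$ and $x,y\in A$ we have $\{x,y,a\}=0$, hence by (\ref{1g}) the action $(x,y)\di(a\cdot m)=a\cdot((x,y)\di m)$, i.e., $\phi$-images commute through the $3$-Lie action. In addition I will repeatedly use the identities $m_{(1)2}S^{-1}(m_{(1)1})=\varepsilon(m_{(1)})1_H$ and $\varepsilon(x_{(1)})x_{(0)}=x$.

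For (1), I would expand $p_M(a\cdot m)=\phi\bigl(S^{-1}(a_{(1)}m_{(1)})\bigr)\cdot(a_{(0)}\cdot m_{(0)})$; applying $S^{-1}(hk)=S^{-1}(k)S^{-1}(h)$, the algebra-map property of $\phi$, and commutativity of $A$ to interleave the $a$- and $m$-factors yields $\phi(S^{-1}(a_{(1)}))a_{(0)}\cdot\phi(S^{-1}(m_{(1)}))\cdot m_{(0)}=p_A(a)\cdot p_M(m)$. For (5) one similarly expands $p_M(m_{(0)})=\phi(S^{-1}(m_{(1)1}))\cdot m_{(0)}$ by coassociativity, and then $\phi(m_{(1)})\cdot p_M(m_{(0)})=\phi\bigl(m_{(1)2}S^{-1}(m_{(1)1})\bigr)\cdot m_{(0)}=\varepsilon(m_{(1)})m_{(0)}=m$. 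Note that specializing (1) with $a=\phi(h)$ gives the useful consequence $p_A(\phi(h))=\varepsilon(h)1_A$.

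For (2), the observation I want to exploit is that $\phi(S^{-1}(m_{(1)}))\in A^A$, so by ingredient (ii) above,
\[
(x,y)\di p_M(m)=\phi(S^{-1}(m_{(1)}))\cdot\bigl((x,y)\di m_{(0)}\bigr).
\]
Applying $p_M$ and using (1) plus $p_A(\phi(h))=\varepsilon(h)1_A$ collapses the factor $\varepsilon(m_{(1)})$ into the coaction to give $p_M((x,y)\di m)=(x,y)\di'm$. For (4), the right-hand side becomes, after invoking (2), $\phi(x_{(1)}y_{(1)})\cdot p_M((x_{(0)},y_{(0)})\di m)$; expanding $p_M$ via the coaction formula (\ref{2b}) for $(x_{(0)},y_{(0)})\di m$ and the coassociativity identification $x_{(0)(0)}\ot x_{(0)(1)}\ot x_{(1)}=x_{(0)}\ot x_{(1)1}\ot x_{(1)2}$, the scalar becomes
\[
\phi\bigl(x_{(1)2}y_{(1)2}\,S^{-1}(x_{(1)1}y_{(1)1}m_{(1)})\bigr).
\]
Using commutativity of $A$ to permute the four $\phi$-factors, the $x$- and $y$-parts collapse independently via $h_2 S^{-1}(h_1)=\varepsilon(h)1_H$, producing $\varepsilon(x_{(1)})\varepsilon(y_{(1)})\phi(S^{-1}(m_{(1)}))$, and absorbing the counits into the $3$-Lie action recovers $\phi(S^{-1}(m_{(1)}))\cdot((x,y)\di m_{(0)})=(x,y)\di p_M(m)$.

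Finally for (3): skew-symmetry of $\di'$ is immediate from that of $\di$ and linearity of $p_M$. For the two fundamental identities (\ref{1c}) and (\ref{1d}) I would apply (2) twice to rewrite each nested term $(x,y)\di'((u,v)\di' m)$ as $p_M((x,y)\di((u,v)\di m))$, move $p_M$ outside the full expression, and then invoke the corresponding identity for the original $\di$-action. The main obstacle throughout is bookkeeping of Sweedler indices; the algebraic content in part (4) is the cleanest test of the compatibility between $\phi$, $S^{-1}$, and the doubled coaction, and I expect that is the only step requiring real care.
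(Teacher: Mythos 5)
Your plan is correct and follows essentially the same route as the paper's proof: the bracket terms vanish because $\phi$ lands in $A^A$, part (1) together with the algebra-map and colinearity properties of $\phi$ and the identity $h_2S^{-1}(h_1)=\varepsilon(h)1_H$ collapses the extra Sweedler legs to give (2), then (2) is fed into (3) and (4) exactly as you describe, with commutativity of $A$ justifying the reordering of the $\phi$-factors in (4). One small correction: $p_A(\phi(h))=\varepsilon(h)1_A$ is not a specialization of (1) but a direct computation from $H$-colinearity and the algebra-map property of $\phi$ together with $h_2S^{-1}(h_1)=\varepsilon(h)1_H$; the identity itself is true, so the rest of your argument is unaffected.
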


\begin{proof}
We only verify the conditions (2),(3) and (4).

(2) For all $x,y\in A,m\in M$,
\begin{align*}
(x,y)\di'p_M(m)&=p_M((x,y)\di p_M(m))\\
&=p_M((x,y)\di (\phi(S^{-1}(m_{(1)}))\cdot m_{(0)}))\\
&=p_M(\{x,y,\phi(S^{-1}(m_{(1)}))\}\cdot m_{(0)}+\phi(S^{-1}(m_{(1)}))\cdot((x,y)\di m_{(0)}))\\
&=p_M(\phi(S^{-1}(m_{(1)}))\cdot((x,y)\di m_{(0)}))\\
&=p_A(\phi(S^{-1}(m_{(1)})))\cdot p_M((x,y)\di m_{(0)})\\
&=\phi(S^{-2}(m_{(1)2}))\phi(S^{-1}(m_{(1)3}))\cdot p_M((x,y)\di m_{(0)})\\
&=p_M((x,y)\di m)\\
&=(x,y)\di' m.
\end{align*}

(3) For all $x,y,u,v\in A,m\in M$, 
$$(x,y)\di' m=p_M((x,y)\di m)=-p_M((y,x)\di m)=-(y,x)\di' m.$$
Then
\begin{align*}
&(\{x,y,u\},v)\di' m+(u,\{x,y,v\})\di' m\\
&=(\{x,y,u\},v)\di' p_M(m)+(u,\{x,y,v\})\di' p_M(m)\\
&=p_M((\{x,y,u\},v)\di p_M(m))+p_M((u,\{x,y,v\})\di p_M(m))\\
&\stackrel{(\ref{1c})}{=}p_M((x, y)\di((u,v)\di p_M(m))-(u,v)\di((x, y)\di p_M(m)))\\
&=(x, y)\di'((u,v)\di p_M(m))-(u,v)\di'((x, y)\di p_M(m))\\
&=(x, y)\di' p_M((u,v)\di p_M(m))-(u,v)\di'p_M((x, y)\di p_M(m))\\
&=(x, y)\di' ((u,v)\di' p_M(m))-(u,v)\di'((x, y)\di' p_M(m))\\
&=(x, y)\di'((u,v)\di' m)-(u,v)\di'((x, y)\di' m),
\end{align*}
and
\begin{align*}
&(\{x,y,u\},v)\di' m=(\{x,y,u\},v)\di' p_M(m)=p_M((\{x,y,u\},v)\di p_M(m))\\
&\stackrel{(\ref{1d})}{=}p_M((y,u)\di((x,v)\di p_M(m)))-p_M((x,u)\di((y,v)\di p_M(m)))+p_M((x,y)\di((u,v)\di p_M(m)))\\
&=(y,u)\di'((x,v)\di p_M(m))-(x,u)\di'((y,v)\di p_M(m))+(x,y)\di'((u,v)\di p_M(m))\\
&=(y,u)\di'p_M((x,v)\di p_M(m))-(x,u)\di'p_M((y,v)\di p_M(m))+(x,y)\di'p_M((u,v)\di p_M(m))\\
&=(y,u)\di'((x,v)\di' p_M(m))-(x,u)\di'((y,v)\di' p_M(m))+(x,y)\di'((u,v)\di' p_M(m))\\
&=(y,u)\di'((x,v)\di' m)-(x,u)\di'((y,v)\di' m)+(x,y)\di'((u,v)\di' m).
\end{align*}
Therefore $(M,\di')$ is a 3-Lie $A$-module.

(4) Let $x,y\in A,m\in M$,
\begin{align*}
&\phi(x_{(1)}y_{(1)})\cdot ((x_{(0)},y_{(0)})\di'p_M(m))\\
&=\phi(x_{(1)}y_{(1)})\cdot ((x_{(0)},y_{(0)})\di'm)\\
&=\phi(x_{(1)}y_{(1)})\cdot p_M(((x_{(0)},y_{(0)})\di m))\\
&=\phi(x_{(1)}y_{(1)})\phi(S^{-1}(((x_{(0)},y_{(0)})\di m)_{(1)}))\cdot (x_{(0)},y_{(0)})\di m)_{(0)}\\
&=\phi(x_{(1)2}y_{(1)2})\phi(S^{-1}(x_{(1)1}y_{(1)1}m_{(1)}))\cdot ((x_{(0)},y_{(0)})\di m_{(0)})\\
&=\phi(S^{-1}(m_{(1)}))\cdot ((x,y)\di m_{(0)})\\
&=(x,y)\di(\phi(S^{-1}(m_{(1)}))\cdot m_{(0)})-\{x,y,\phi(S^{-1}(m_{(1)}))\}\cdot m_{(0)}\\
&=(x,y)\di p_M(m).
\end{align*}
The proof is completed.
\end{proof}

Since the $H$-comodule Poisson 3-Lie algebra $A$ itself is a Poisson 3-Lie $(A,H)$-Hopf module, $A$ is a 3-Lie $A$-module under the action given by
$$(x,y)\di'a=p_A(\{x,y,a\}), $$
for all $x,y,a\in A$. Because $Imp_A=A^{coH}$, we have $A^{coH}$ is a 3-Lie $A$-submodule of $A$ under the action $\di'$. Actually for any Poisson 3-Lie $(A,H)$-Hopf module $M$, $(M^{coH},\di')$ is a 3-Lie $A$-submodule of $M$. However $M^{coH}$ in general is not a 3-Lie $A$-submodule under the action $\di$.

\begin{lemma}\label{lem:3e}
Let $\phi:H\rightarrow A^A$ be a right $H$-colinear algebra map, and $M\in\! _{\mathcal{P}A}\mathcal{M}^H$.
\begin{itemize}
  \item [(1)] If the 3-Lie action $\di'$ on $M^{coH}$ is trivial, then $M^{AcoH}=M^{coH}$.
  \item [(2)] If the 3-Lie action $\di'$ on $A^{coH}$ is trivial, then $A^{AcoH}=A^{coH}$.
\end{itemize}
\end{lemma}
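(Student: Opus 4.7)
The plan is to prove (1) first and then deduce (2) as the special case $M = A$. The inclusion $M^{AcoH} \subseteq M^{coH}$ is immediate from the definitions, so everything reduces to showing $M^{coH} \subseteq M^{AcoH}$, i.e.\ that every coinvariant is already Lie-$A$-invariant under the original action $\diamond$.

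First I would observe that for $m \in M^{coH}$, the formula $p_M(m) = \phi(S^{-1}(m_{(1)})) \cdot m_{(0)}$ collapses to $\phi(S^{-1}(1_H)) \cdot m = \phi(1_H) \cdot m = 1_A \cdot m = m$ (using that $\phi$ is unital, which we may assume from the setup of Section~3 before Lemma~3.4). So $p_M$ restricts to the identity on $M^{coH}$. This is the crucial link between the original action $\diamond$ and the modified action $\diamond'$.

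Next I would apply Lemma~\ref{lem:3d}(4), which says
$$(x,y)\diamond p_M(m) = \phi(x_{(1)} y_{(1)}) \cdot \bigl((x_{(0)}, y_{(0)}) \diamond' p_M(m)\bigr).$$
For $m \in M^{coH}$, the previous step gives $p_M(m) = m \in M^{coH}$, and the triviality hypothesis on $\diamond'$ on $M^{coH}$ forces every summand $(x_{(0)}, y_{(0)}) \diamond' p_M(m)$ on the right to vanish. Therefore $(x,y) \diamond m = (x,y) \diamond p_M(m) = 0$ for all $x, y \in A$, i.e.\ $m \in M^A$. Combined with $m \in M^{coH}$, this yields $m \in M^{AcoH}$, proving (1).

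For (2), I would invoke the Remark after Definition~2.3 that the $H$-comodule Poisson 3-Lie algebra $A$ is itself a Poisson 3-Lie $(A,H)$-Hopf module, and apply part (1) with $M = A$; the action $\diamond'$ on $A^{coH}$ is by hypothesis trivial, so $A^{AcoH} = A^{coH}$. I do not expect any serious obstacle: the only delicate point is confirming that $p_M$ restricts to the identity on $M^{coH}$ and matching the summation indices when feeding $p_M(m) = m$ into Lemma~\ref{lem:3d}(4), both of which are bookkeeping rather than real computation.
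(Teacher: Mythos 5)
Your proposal is correct and follows essentially the same route as the paper: the paper's proof likewise feeds $m\in M^{coH}$ (for which $p_M(m)=\phi(1_H)\cdot m=m$) into Lemma \ref{lem:3d}(4) to get $(x,y)\di m=\phi(x_{(1)}y_{(1)})\cdot((x_{(0)},y_{(0)})\di' p_M(m))=0$, so $M^{coH}\subseteq M^{A}$, and part (2) is the case $M=A$. Your explicit verification that $p_M$ is the identity on $M^{coH}$ just spells out a step the paper leaves implicit.
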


\begin{proof}
Let $x,y\in A,m\in M^{coH}$, by Lemma \ref{lem:3d}(4), 
$$(x,y)\di m=(x,y)\di p_M(m)=\phi(x_{(1)}y_{(1)})\cdot ((x_{(0)},y_{(0)})\di'p_M(m))=0,$$
thus $m\in M^A$, that is, $m\in M^{AcoH}$.
\end{proof}

\begin{theorem}\label{thm:3f}
Let $M$ be a Poisson 3-Lie $(A, H)$-Hopf module and $\phi:H\rightarrow A^A$ a right $H$-colinear algebra map. Suppose $M^{coH}$ and $A^{coH}$ are trivial 3-Lie $A$-modules under $\di'$. Then the linear map $\alpha$ given in Lemma \ref{lem:3c} is an isomorphism of Poisson 3-Lie $(A, H)$-Hopf modules.
\end{theorem}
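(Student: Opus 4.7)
The plan is to prove bijectivity of the morphism $\alpha$ from Lemma \ref{lem:3c} by exhibiting an explicit two-sided inverse, in the spirit of the classical fundamental theorem of Hopf modules. The first step is a reduction using the hypothesis: by Lemma \ref{lem:3e}, the triviality of $\di'$ on both $M^{coH}$ and $A^{coH}$ forces $M^{AcoH}=M^{coH}$ and $A^{AcoH}=A^{coH}$. In particular $B=A^{coH}$, so the domain of $\alpha$ simplifies to $A\ot_{A^{coH}}M^{coH}$, matching the shape of the domain in the classical $(A,H)$-Hopf module setting.

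Next I would define the candidate inverse
\[
\beta:M\rightarrow A\ot_B M^{coH},\qquad \beta(m)=\phi(m_{(1)})\ot_B p_M(m_{(0)}),
\]
which is well-defined as a linear map since $\mrm{Im}\,p_M=M^{coH}$. The relation $\alpha\circ\beta=\id_M$ is then an immediate consequence of Lemma \ref{lem:3d}(5). For the other composition, take $a\in A$ and $m\in M^{coH}$; the $(A,H)$-Hopf module compatibility together with $m\in M^{coH}$ yields $(a\cdot m)_{(0)}\ot(a\cdot m)_{(1)}=a_{(0)}\cdot m\ot a_{(1)}$, and hence
\[
\beta(a\cdot m)=\phi(a_{(1)})\ot_B p_M(a_{(0)}\cdot m)=\phi(a_{(1)})\ot_B p_A(a_{(0)})\cdot m
\]
by Lemma \ref{lem:3d}(1) and the fact that $p_M$ restricts to the identity on $M^{coH}=\mrm{Im}\,p_M$. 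Since $p_A(a_{(0)})\in A^{coH}=B$, this element can be pushed across the balanced tensor to give $\phi(a_{(1)})p_A(a_{(0)})\ot_B m=a\ot_B m$, where the last equality is Lemma \ref{lem:3d}(5) applied to the Poisson 3-Lie $(A,H)$-Hopf module $A$ itself.

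Since $\alpha$ is already a morphism of Poisson 3-Lie $(A,H)$-Hopf modules by Lemma \ref{lem:3c} and the above argument shows it is linearly bijective with set-theoretic inverse $\beta$, the inverse automatically respects the left $A$-module, 3-Lie $A$-module, and $H$-comodule structures, so $\alpha$ is the desired isomorphism. The main obstacle is really the reduction step: it is the identification $B=A^{coH}$ coming from the triviality hypothesis on $A^{coH}$ (via Lemma \ref{lem:3e}) that permits moving $p_A(a_{(0)})$ across the tensor. Without this, the balanced tensor would be over the strictly smaller $A^{AcoH}$ and the inverse formula could not be manipulated as above; everything else is a routine verification of well-definedness and compatibility.
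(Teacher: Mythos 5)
Your proposal is correct and follows essentially the same route as the paper: it constructs the same inverse $\beta(m)=\phi(m_{(1)})\ot_B p_M(m_{(0)})$, justifies its well-definedness via Lemma \ref{lem:3e}, and verifies both compositions using Lemma \ref{lem:3d}. The only difference is that you spell out the computation of $\beta\circ\alpha=\id$ (including the use of $p_A(a_{(0)})\in A^{coH}=B$ to pass across the balanced tensor), which the paper leaves as a straightforward check.
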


\begin{proof}
By Lemma \ref{lem:3c}, $\alpha$ is a homomorphism of Poisson 3-Lie $(A,H)$-Hopf modules. Let $a\in A,m\in M$, by Lemma \ref{lem:3e}, we have $p_M(m)\in M^{AcoH}$. So the following linear map is well defined:
$$\beta:M\rightarrow A\ot_B M^{AcoH},\ m\mapsto\phi(m_{(1)})\ot_B p_M(m_{(0)}).$$
It is straightforward to check that $\alpha\circ\beta=id_{M}$ and $\beta\circ\alpha=id_{A\ot_BM^{AcoH}}$. Therefore $\alpha$ is an isomorphism of Poisson 3-Lie $(A,H)$-Hopf modules.
\end{proof}

Let $A$ be an $H$-comodule Poisson 3-Lie algebra. Let $I$ be a vector subspace of $A$. We say that $I$ is
\begin{itemize}
    \item[(i)] an $H$-ideal of $A$ if $I$ is an ideal of $A$ and an $H$-subcomodule of $A$;
    \item[(ii)] a Poisson 3-Lie ideal of $A$ if $I$ is an ideal of $A$ and a Lie ideal of $A$;
    \item[(iii)] a Poisson 3-Lie $H$-ideal of $A$ if $I$ is an $H$-ideal of $A$ and a Poisson 3-Lie ideal of $A$.
\end{itemize}

An $H$-comodule Poisson 3-Lie algebra $A$ is Poisson $H$-simple if the only Poisson 3-Lie $H$-ideals of $A$ are 0 and $A$.

\begin{lemma}\label{lem:3g}
Let $A$ be a Poisson $H$-simple $H$-comodule Poisson 3-Lie algebra. Then $A^{AcoH}$ is a field.
\end{lemma}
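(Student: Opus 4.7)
The plan is to adapt the classical Hopf-module argument (if $aA$ is always an $H$-ideal for coinvariants $a$, then $A^{coH}$ is a field) to the Poisson 3-Lie setting, where the extra condition needed on $a$ is precisely that $a$ lies in the Poisson center $A^{A}$.

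First I would observe that $A^{AcoH}$ is a commutative unital subring of $A$: closure under sum and product follows because both the coaction and the 3-Lie bracket are compatible with the associative product (for $a,b\in A^{AcoH}$, $(ab)_{(0)}\otimes(ab)_{(1)}=a_{(0)}b_{(0)}\otimes a_{(1)}b_{(1)}=ab\otimes 1_H$ and $\{x,y,ab\}=a\{x,y,b\}+\{x,y,a\}b=0$ by \eqref{1e}), while $1_A\in A^{AcoH}$ because $\{x,y,1\}=0$. So to conclude it is a field it suffices to produce a multiplicative inverse for every nonzero $a\in A^{AcoH}$.

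The key step is to show that the principal ideal $aA$ is a Poisson 3-Lie $H$-ideal. It is already an ordinary ideal by commutativity. For the 3-Lie ideal property, the derivation identity \eqref{1e} and the fact that $a\in A^A$ give $\{x,y,ab\}=a\{x,y,b\}+\{x,y,a\}b=a\{x,y,b\}\in aA$. For the $H$-subcomodule property, coinvariance of $a$ yields $(ab)_{(0)}\otimes(ab)_{(1)}=ab_{(0)}\otimes b_{(1)}\in aA\otimes H$. Since $a\neq 0$ and $A$ is Poisson $H$-simple, we conclude $aA=A$, so there exists $b\in A$ with $ab=1$.

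The final step is to verify that this inverse $b$ lies back in $A^{AcoH}$. Applying the coaction to $ab=1$ and using $\rho(a)=a\otimes 1$ gives $ab_{(0)}\otimes b_{(1)}=1\otimes 1$; multiplying through by $b$ on the left of the first tensor factor yields $b_{(0)}\otimes b_{(1)}=b\otimes 1$, so $b\in A^{coH}$. For $b\in A^A$, apply $\{x,y,-\}$ to the identity $ab=1$: using $\{x,y,1\}=0$ and $a\in A^A$, the derivation rule gives $0=a\{x,y,b\}$, and multiplying by $b$ produces $\{x,y,b\}=0$. Hence $b\in A^{AcoH}$, so $a$ is invertible in $A^{AcoH}$, and $A^{AcoH}$ is a field. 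The main (mild) obstacle is the bookkeeping between $A^{A}$ and $A^{coH}$ when checking that $aA$ is closed under the 3-Lie bracket and that the inverse inherits both properties; this is precisely where the centrality hypothesis $a\in A^A$ is used twice.
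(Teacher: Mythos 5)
Your proof is correct and follows essentially the same route as the paper: for a nonzero element $a\in A^{AcoH}$ one shows that the principal ideal $aA$ is a Poisson 3-Lie $H$-ideal (using $\{x,y,a\}=0$ and $\rho(a)=a\otimes 1_H$) and then invokes Poisson $H$-simplicity to conclude $aA=A$ and extract an inverse. Your additional check that the inverse again lies in $A^{AcoH}$ (both coinvariant and Lie-central) is a worthwhile step that the paper's proof leaves implicit.
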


\begin{proof}
Let $b\in A^{AcoH}, b\neq 0$, then $Ab$ is an ideal of $A$ since $A$ is commutative. For all $a,x,y\in A$, we have
$$(x,y)\di(ab)=\{x,y,ab\}=\{x,y,b\}a+\{x,y,a\}b=\{x,y,a\}b\in Ab,$$
that is, $Ab$ is a 3-Lie ideal of $A$. It is clear that $Ab$ is an $H$-subcomodule of $A$. Therefore $Ab$ is a Poisson 3-Lie idea of $A$. Since $Ab\neq0$ and $A$ is Poisson $H$-simple, we obtain $Ab=A$, so there exists $a'\in A$ such that $a'b=1$, namely, $b$ is invertible.
\end{proof}

By Theorem \ref{thm:3f} and Lemma \ref{lem:3g}, we immediately obtain the following corollary.

\begin{corollary}
Let $A$ be a Poisson $H$-simple $H$-comodule Poisson 3-Lie algebra and $M$ a Poisson 3-Lie $(A, H)$-Hopf module. Assume that there exists a right $H$-colinear map $\phi:H\rightarrow A^A$ which is also an algebra map, and that $M^{AcoH}$ and $A^{AcoH}$ are trivial Lie $A$-module under the action of $\di'$. Then $M$ is a free as an $A$-module with rank the dimension of $M^{AcoH}$ over $A^{AcoH}$.
\end{corollary}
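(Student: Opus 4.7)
The plan is to assemble the corollary directly from the two ingredients already at hand: the structural isomorphism from Theorem \ref{thm:3f} and the ``base field'' property from Lemma \ref{lem:3g}. There is essentially nothing to prove beyond noticing that a module over a field is automatically free, and that free-ness is preserved by extension of scalars to $A$ along $B\hookrightarrow A$ via the relative tensor product.

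First, I would verify that the hypotheses of Theorem \ref{thm:3f} hold under the assumptions of the corollary. The triviality of the $\diamond'$-action on $M^{AcoH}$ (resp.\ $A^{AcoH}$) together with Lemma \ref{lem:3e} gives $M^{AcoH}=M^{coH}$ and $A^{AcoH}=A^{coH}$, and in particular the $\diamond'$-action is also trivial on $M^{coH}$ and $A^{coH}$. With these in place, Theorem \ref{thm:3f} yields an isomorphism of Poisson 3-Lie $(A,H)$-Hopf modules
\[
\alpha\colon A\otimes_B M^{AcoH}\ \xrightarrow{\ \cong\ }\ M,\qquad a\otimes_B m\mapsto a\cdot m,
\]
where $B=A^{AcoH}$. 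In particular $\alpha$ is an isomorphism of left $A$-modules.

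Next, since $A$ is Poisson $H$-simple, Lemma \ref{lem:3g} guarantees that $B=A^{AcoH}$ is a field. Consequently $M^{AcoH}$ is a $B$-vector space, and by choosing a $B$-basis $\{m_i\}_{i\in I}$ of $M^{AcoH}$ one has $|I|=\dim_B M^{AcoH}$. The family $\{1_A\otimes_B m_i\}_{i\in I}$ is then a basis of the free $A$-module $A\otimes_B M^{AcoH}$: linear independence follows because any relation $\sum a_i\otimes_B m_i=0$ pulls back to a relation on basis vectors over $B$ (using that $A$ is free over itself, equivalently that $A\otimes_B-$ takes bases to bases when the source is a $B$-vector space), and spanning is immediate from bilinearity.

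Transporting this basis across $\alpha$ produces an $A$-basis of $M$ of cardinality $\dim_B M^{AcoH}$, which is precisely the desired freeness statement. The only ``obstacle'' worth flagging is the bookkeeping around the hypothesis: one must make explicit the reduction $M^{AcoH}=M^{coH}$ and $A^{AcoH}=A^{coH}$ via Lemma \ref{lem:3e} before Theorem \ref{thm:3f} can be applied verbatim; beyond that, the argument is a one-line consequence of ``vector spaces over a field are free.''
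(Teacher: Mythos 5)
Your overall route is exactly the paper's: the paper gives no argument beyond ``immediate from Theorem \ref{thm:3f} and Lemma \ref{lem:3g}'', and your proof fills that in correctly --- apply Theorem \ref{thm:3f} to get the $A$-module isomorphism $\alpha\colon A\ot_B M^{AcoH}\to M$, use Lemma \ref{lem:3g} to see that $B=A^{AcoH}$ is a field, pick a $B$-basis of $M^{AcoH}$, and transport $\{1_A\ot_B m_i\}$ along $\alpha$.

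The one step that does not work as you wrote it is the bridging of hypotheses via Lemma \ref{lem:3e}. That lemma runs in the opposite direction: its hypothesis is that $\di'$ is trivial on $M^{coH}$ (respectively $A^{coH}$), and its conclusion is $M^{AcoH}=M^{coH}$. Triviality of $\di'$ on $M^{AcoH}$ cannot be upgraded to triviality on the larger space $M^{coH}$; in fact it carries no information at all, since for $m\in M^{AcoH}$ one has $(x,y)\di m=0$ and hence $(x,y)\di' m=p_M((x,y)\di m)=0$ automatically. So, read literally, the corollary's hypothesis is vacuous and does not yield the hypothesis of Theorem \ref{thm:3f}; the intended reading (and the one the paper tacitly uses, matching the wording of Theorem \ref{thm:3f}) is that $\di'$ is trivial on $M^{coH}$ and $A^{coH}$. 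With that reading your Lemma \ref{lem:3e} detour is unnecessary --- Theorem \ref{thm:3f} applies verbatim --- and the rest of your argument (field $B$, bases preserved under $A\ot_B-$, transport along $\alpha$) is exactly what the paper's ``immediately'' means. So the flaw is a misdirected citation inherited from an imprecision in the statement itself, not a defect in the strategy.
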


Let $M,N\in\! _{\mathcal{P}A}\mathcal{M}^H$ and the morphism $f:M\rightarrow N$. For $m\in M^{AcoH},x,y\in A$, we compute
\begin{align*}
&f(m)_{(0)}\ot f(m)_{(1)}=f(m_{(0)})\ot m_{(1)}=f(m)\ot 1_H,\\
&(x,y)\di f(m)=f((x,y)\di m)=0,
\end{align*}
thus $f(m)\in N^{AcoH}$ and $f(M^{AcoH})\subseteq N^{AcoH}$. This gives rise to a functor 
$$G:\!  _{\mathcal{P}A}\mathcal{M}^H\rightarrow \! _B\mathcal{M},\quad M\mapsto M^{AcoH}.$$
By Lemma \ref{lem:3a}, we also get a functor 
$$F:\! _B\mathcal{M}\rightarrow\!  _{\mathcal{P}A}\mathcal{M}^H,\quad M\mapsto A\ot_BM.$$

\begin{proposition}\label{pro:3f}
Let $M\in\! _{\mathcal{P}A}\mathcal{M}^H$ and $N\in\! _B\mathcal{M}$. There is a functorial isomorphism of transposed Poisson $(A, H)$-Hopf modules
\begin{align*}
\psi:\! _{\mathcal{P}A}Hom^H(A\ot_B&N,M)\rightarrow Hom_B(N,M^{AcoH})\\
&f\mapsto\psi(f):N\rightarrow M^{AcoH},\ n\mapsto f(1_A\ot_B n).
\end{align*}
Thus the functors $F$ and $G$ form an adjoint pair with unit and counit
\begin{align*}
&\eta_N:N\rightarrow (A\ot_BN)^{AcoH},\ n\mapsto 1_A\ot_B n,\\
&\epsilon_M:A\ot_B M^{AcoH}\rightarrow M,\ a\ot_B m\mapsto a\cdot m.
\end{align*}
\end{proposition}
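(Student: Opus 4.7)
The plan is to prove this by constructing an explicit inverse to $\psi$ and then interpreting the bijection as a hom-tensor adjunction. First I would check that $\psi$ is well-defined, i.e.\ that $\psi(f)(n) = f(1_A \otimes_B n)$ really lies in $M^{AcoH}$ and is $B$-linear. For the $H$-coinvariance, applying $\rho_M$ and using the $H$-colinearity of $f$ together with the coaction $\rho_{A \otimes_B N}(1_A \otimes_B n) = 1_A \otimes_B n \otimes 1_H$ from Lemma~\ref{lem:3a} shows $\psi(f)(n) \in M^{coH}$. For the $A$-invariance, $(x,y) \diamond f(1_A \otimes_B n) = f(\{x,y,1_A\} \otimes_B n) = 0$ because $\{x,y,1_A\} = 0$. $B$-linearity of $\psi(f)$ follows since $f$ is left $A$-linear and $B \subseteq A$.

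Next I would define the candidate inverse
\[
\psi'(g): A \otimes_B N \to M, \qquad a \otimes_B n \mapsto a \cdot g(n),
\]
for $g \in \mathrm{Hom}_B(N, M^{AcoH})$. Well-definedness over $\otimes_B$ uses $B$-linearity of $g$ and the $A$-module structure. The key verifications are that $\psi'(g)$ is a morphism in $_{\mathcal{P}A}\mathcal{M}^H$: left $A$-linearity is immediate; 3-Lie $A$-linearity reduces to
\[
(x,y) \diamond (a \cdot g(n)) = \{x,y,a\} \cdot g(n) + a \cdot ((x,y) \diamond g(n)) = \{x,y,a\} \cdot g(n),
\]
where the second term vanishes because $g(n) \in M^{A}$; $H$-colinearity follows because $g(n) \in M^{coH}$ gives $\rho_M(a \cdot g(n)) = a_{(0)} \cdot g(n) \otimes a_{(1)}$, matching the coaction $\rho_{A \otimes_B N}(a \otimes_B n) = a_{(0)} \otimes_B n \otimes a_{(1)}$. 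The identities $\psi \circ \psi' = \mathrm{id}$ and $\psi' \circ \psi = \mathrm{id}$ are then one-line computations: $\psi(\psi'(g))(n) = 1_A \cdot g(n) = g(n)$ and $\psi'(\psi(f))(a \otimes_B n) = a \cdot f(1_A \otimes_B n) = f(a \otimes_B n)$ using $A$-linearity of $f$.

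Functoriality in both arguments is routine: a morphism $M \to M'$ of Poisson 3-Lie $(A,H)$-Hopf modules restricts to $M^{AcoH} \to (M')^{AcoH}$ (as remarked before the proposition), and a morphism $N \to N'$ of $B$-modules induces $A \otimes_B N \to A \otimes_B N'$, and $\psi$ intertwines these. Once the natural bijection is established, the unit $\eta_N$ corresponds under $\psi^{-1}$ to $\mathrm{id}_{A \otimes_B N}$, and the counit $\epsilon_M$ corresponds to $\mathrm{id}_{M^{AcoH}}$; checking this amounts to tracing the formulas, and well-definedness of $\eta_N$ uses exactly the two identities $\rho_{A \otimes_B N}(1_A \otimes_B n) = 1_A \otimes_B n \otimes 1_H$ and $\{x,y,1_A\} = 0$ invoked above.

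The main obstacle is not conceptual but bookkeeping: verifying simultaneously the $A$-linearity, 3-Lie $A$-linearity, and $H$-colinearity of $\psi'(g)$ requires repeatedly using that $g(n)$ is both $A$-invariant and $H$-coinvariant, and the compatibility (\ref{1g}) between the commutative and 3-Lie actions. Once one observes that every term of the form $a \cdot ((x,y) \diamond g(n))$ or $g(n)_{(1)}$-contribution collapses because $g(n) \in M^{AcoH}$, the adjunction falls out with no further obstruction.
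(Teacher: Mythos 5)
Your proposal is correct and follows essentially the same route as the paper: the well-definedness of $\psi$ via $\{x,y,1_A\}=0$ and $H$-colinearity, the explicit inverse $\psi'(g)(a\ot_B n)=a\cdot g(n)$ whose 3-Lie $A$-linearity and $H$-colinearity collapse because $g(n)\in M^{AcoH}$, and the two one-line inverse checks. You in fact spell out the functoriality and the unit/counit identification in slightly more detail than the paper, which leaves those as routine.
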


\begin{proof}
For all $x,y\in A,n\in N$,
$$(x,y)\di f(1_A\ot_Bn)=f((x,y)\di(1_A\ot_Bn))=f(\{x,y,1_A\}\ot_Bn)=0,$$
hence $f(1_A\ot_Bn)\in M^{A}$. Since $f$ is a morphism of $H$-comodule, we could obtain that $f(1_A\ot_Bn)\in M^{coH}$. Thus $f(1_A\ot_Bn)\in M^{AcoH}$. Also it is straightforward to verify that $\psi(f)$ is an $B$-module map.  Therefore $\psi$ is well defined. 

Now define
\begin{align*}
\psi':Hom_B(N,&M^{AcoH})\rightarrow\! _{\mathcal{P}A}Hom^H(A\ot_BN,M)\\
&g\mapsto\psi'(g):A\ot_B N\rightarrow M,\ a\ot_B n\mapsto a\cdot g(n).
\end{align*}
For any $a,a'\in A,n\in N$, we have
\begin{align*}
&\psi'(g)((x,y)\di(a\ot_B n))=\psi'(g)(\{x,y,a\}\ot_B n)\\
&=\{x,y,a\}\cdot g(n)=(x,y)\di(a\cdot g(n))-a\cdot((x,y)\di g(n))\\
&=(x,y)\di(a\cdot g(n)),
\end{align*}
where the fourth identity holds since $g(n)\in M^{AcoH}$, that is, $\psi'(g)$ is Lie $A$-linear. Easy to see that $\psi'(g)$ is $A$-linear and $H$-colinear. Thus $\psi'$ is also well defined. It is a routine exercise to check that $\psi\circ\psi'$ and $\psi'\circ\psi$ are respectively the identity of $Hom_B(N,M^{AcoH})$ and $_{\mathcal{P}A}Hom^H(A\ot_BN,M)$. The proof is completed.
\end{proof}

\vspace{0.1cm}
 \noindent
{\bf Acknowledgements. }

\bibliographystyle{amsplain}

\end{document}